\newcommand{\calC}{\mathcal{C}}
\newcommand{\calD}{\mathcal{D}}
\newcommand{\calE}{\mathcal{E}}
\newcommand{\calF}{\mathcal{F}}
\newcommand{\calG}{\mathcal{G}}
\newcommand{\calS}{\mathcal{S}}
\newcommand{\calT}{\mathcal{T}}
\newcommand{\ZZ}{\mathbb{Z}}
\newcommand{\RR}{\mathbb{R}}
\newcommand{\TT}{\mathbb{T}}
\newcommand{\kk}{\Bbbk}
\newcommand{\scrC}{\mathscr{C}}
\newcommand{\ab}{\mathbf{a}}
\newcommand{\bfa}{\mathbf{a}}
\newcommand{\cb}{\mathbf{c}}
\newcommand{\kb}{\mathbf{k}}
\newcommand{\lb}{\mathbf{l}}
\newcommand{\bfx}{\mathbf{x}}
\newcommand{\xb}{\mathbf{x}}
\newcommand{\yb}{\mathbf{y}}
\newcommand{\zb}{\mathbf{z}}
\newcommand{\sfM}{\mathsf{M}}
\newcommand{\sfN}{\mathsf{N}}
\newcommand{\fkm}{\mathfrak{m}}
\newcommand{\Hom}{\operatorname{Hom}}
\newcommand{\conv}{\mathrm{conv}}
\newcommand{\cone}{\mathrm{cone}}
\newcommand{\GL}{\mathrm{GL}}
\newcommand{\Vol}{\mathrm{Vol}}
\newcommand{\Stab}{\mathrm{Stab}}
\newcommand{\set}[1]{\left\{ #1 \right\}}
\newcommand{\rbra}[1]{\left( #1 \right)}
\def\opn#1#2{\def#1{\operatorname{#2}}} 
\opn\Cl{Cl} \opn\conv{conv} \opn\deg{deg} \opn\rank{rank} \opn\Spec{Spec} \opn\Stab{Stab} \opn\aff{aff} \opn\div{div} \opn\GL{GL}
\opn\cone{cone} \opn\End{End} \opn\Hom{Hom} \opn\mod{mod} \opn\gldim{gldim} \opn\pdim{pdim} \opn\diag{diag} \opn\vert{vert}
\opn\Block{Block} \opn\Pyr{Pyr} \opn\max{max} \opn\min{min} \opn\ini{in} \opn\rev{rev} \opn\ker{ker} \opn\lat{lat} \opn\pull{pull} \opn\rev{rev}
\opn\Gale{Gale} \opn\sign{sign} \opn\supp{supp}
\opn\cok{coker} \opn\core{core} \opn\star{star}
\opn\pd{pd} \opn\Soc{Soc} \opn\Ap{Ap} \opn\Sym{Sym}
\opn\PF{PF} \opn\t{t} \opn\F{F} \opn\e{e}
\opn\m{m} \opn\G{G} \opn\g{g} \opn\H{H}
\opn \embdim{embdim} \opn\var{var}
\opn{\mult}{mult} \opn{\emb}{emb} \opn{\charac}{char}
\newtheorem{thm}{Theorem}[section]
\newtheorem{theorem}[thm]{Theorem}
\newtheorem{cor}[thm]{Corollary}
\newtheorem{lemma}[thm]{Lemma}
\newtheorem{prop}[thm]{Proposition}
\newtheorem{conj}[thm]{Conjecture}
\theoremstyle{definition}
\newtheorem{defi}[thm]{Definition}
\newtheorem{ex}[thm]{Example}
\theoremstyle{remark}
\begin{document}

\title{Dual $F$-signatures of Veronese subrings and Segre products of polynomial rings}
\author{Koji Matsushita}

\address{Department of Pure and Applied Mathematics, Graduate School of Information Science and Technology, Osaka University, Suita, Osaka 565-0871, Japan}
\email{k-matsushita@ist.osaka-u.ac.jp}

\subjclass[2020]{Primary 13A35, 05E40; Secondary 52B20, 14M25}

\keywords{Dual $F$-signatures, Toric rings, Veronese subrings of polynomial rings, Segre products of polynomial rings}

\maketitle

\begin{abstract} 
In this paper, we compute the dual $F$-signatures of certain toric rings by using combinatorial techniques.
Specifically, we calculate the dual $F$-signatures of Veronese subrings of polynomial rings.
Moreover, we give an upper bound for the dual $F$-signatures of Segre products of polynomial rings and show that this upper bound is attained in some cases.
\end{abstract}

\section{Introduction}
Recently, numerical invariants associated with commutative rings with positive characteristic have been defined and investigated, such as Hilbert-Kunz multiplicities (\cite{Kunz76,Mon83}), $F$-signatures (\cite{HunLeu,Tuc12}), dual $F$-signatures (\cite{San}) and so on.
These invariants tell us valuable information about the underlying ring $R$.
In fact, they characterize various properties of $R$, for example, regularity, strong $F$-regularity, $F$-rationality and Gorensteinness.
On the other hand, it is not easy to compute their values in concrete terms.
In this paper, we focus on the dual $F$-signatures of toric rings and compute them specifically in certain classes.

Let ($R$, $\fkm$) be a Noetherian local ring of prime characteristic $p>0$ and assume that the residue field $R/\fkm$ is algebraically closed.
In this situation, we can define the $e$-times iterated Frobenius morphism $F^e:R\to R\;(r\mapsto r^{p^e})$ for $e\in\ZZ_{>0}$. 
For an $R$-module $M$, let $F^e_{*}M$ denote the \textit{Frobenius push-forward} of $M$, which is an $R$-module given by restriction of scalars under $F^e$ (that is, $F^e_*M$ is just $M$ as an abelian group and its $R$-module structure is defined by $r\cdot m\coloneqq F^e(r)m=r^{p^e}m$ $\;(r\in R,\;m\in M)$).
We say $R$ is \textit{$F$-finite} if $F^e_*R$ is a finitely generated $R$-module. 
In this paper, we always assume that $R$ is $F$-finite since we only discuss such rings.
In addition, we assume that $R$ is reduced, and hence we can identify $F^e_*R$ with $R^{1/p^e}$, which is the $R$-algebra consisting of $p^e$th roots of $R$ inside an algebraic closure of its fraction field.

In \cite{San}, Sannai introduced the following value for an $R$-module $M$:
\begin{align}\label{dualM}
s(M) := \limsup_{e \to \infty} 
\frac{\max \{N : \text{ there is a surjection $F_*^e M \twoheadrightarrow M^{\oplus N}$}\}}
{\rank F_*^e M}.
\end{align}
The value $s(R)$ is called the \textit{$F$-signature} of $R$, which can also be described as the following form:
\begin{align}\label{flimi}
s(R)=\lim_{e\to \infty}\frac{\max \{N : R^{1/p^e} \cong R^{\oplus N}\oplus L\}}{\rank R^{1/p^e}},
\end{align}
where $L$ is an $R$-module without free direct summands (this is the original form of the definition of $F$-signatures (see \cite{HunLeu}) and the existence of the limit (\ref{flimi}) is shown in \cite{Tuc12}).

Sannai calls the limit (\ref{dualM}) the \textit{dual $F$-signature of $M$}.
On the other hand, in \cite{ST}, Smirnov and Tucker call $s(\omega_R)$ the \textit{dual $F$-signature of $R$} and write it by $s_{\mathrm{dual}}(R)$ where $\omega_R$ denotes the canonical module of $R$.
They also introduce the notion of the \textit{relative $F$-rational signature} $s_{\mathrm{rel}}(R)$ of $R$ and show that $s_{\mathrm{rel}}(R)$ coincides with $s(\omega_R)$ (\cite[Corollary~5.6 and Theorem~5.10]{ST}).

In this paper, we focus only on $s(\omega_R)$, thus we use the later definition, that is,
\begin{defi}
We define the \textit{dual $F$-signature} of $R$ as
$$s_{\mathrm{dual}}(R):=s(\omega_R)=\limsup_{e \to \infty} 
\frac{\max \{N : \text{ there is a surjection $F_*^e \omega_R \twoheadrightarrow \omega_R^{\oplus N}$}\}}
{\rank F_*^e \omega_R}.$$
\end{defi}

Regarding $s(R)$ and $s_{\mathrm{dual}}(R)$, the following facts are known:

\begin{thm}[{see \cite{AL,HunLeu,San,ST,WY}}]
Let $(R,\fkm)$ be a reduced $F$-finite Cohen--Macaulay local ring with $\charac\,R=p>0$ and assume that $R/\fkm$ is an algebraically closed field. 
Then we have the following.
\begin{itemize}
\item[(1)] $0\le s(R) \le s_{\mathrm{dual}}(R)\le 1$.
\item[(2)] The following are equivalent:
\begin{itemize}
    \item[(a)] $R$ is regular;
    \item[(b)] $s(R)=1$;
    \item[(c)] $s_{\mathrm{dual}}(R)=1$.
\end{itemize}
\item[(3)] $R$ is strongly $F$-regular if and only if $s(R)>0$.
\item[(4)] $R$ is $F$-rational if and only if $s_{\mathrm{dual}}(R)>0$.
\item[(5)] $R$ is Gorenstein if and only if $s(R)=s_{\mathrm{dual}}(R)$.
\end{itemize}
\end{thm}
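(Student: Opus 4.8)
The statement is a recollection, so the plan is to assemble it from \cite{AL,HunLeu,San,ST,WY}, carrying out by hand only the book-keeping and quoting the two deep characterizations. In brief: (1) is a rank count together with a duality argument; the implications ``$R$ regular $\Rightarrow$'' in (2) and ``$R$ Gorenstein $\Rightarrow$'' in (5) are formal; the converses in (2) and (5) come from \cite{HunLeu,San,ST}; (3) is Aberbach--Leuschke \cite{AL}; and (4) is \cite[Corollary~5.6, Theorem~5.10]{ST} combined with \cite{San}.

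First I would settle (1). Since $R$ is reduced and $F$-finite, localizing $F^e_*R\cong R^{1/p^e}$ at a minimal prime gives $\rank F^e_*R=p^{ed}$ with $d=\dim R$, and likewise $\rank F^e_*\omega_R=p^{ed}$ because $\omega_R$ has rank $1$ on each component (recall $R$ is local Cohen--Macaulay, hence equidimensional); so any surjection $F^e_*\omega_R\twoheadrightarrow\omega_R^{\oplus N}$, and any free summand $R^{\oplus N}$ of $F^e_*R$, forces $N\le p^{ed}$, whence $s_{\mathrm{dual}}(R)\le1$ and $s(R)\le1$, while $s(R)\ge0$ is immediate. For $s(R)\le s_{\mathrm{dual}}(R)$ I would fix, for each $e$, a free summand $R^{\oplus N}\hookrightarrow F^e_*R$ realizing the $e$-th term of $s(R)$, apply $\Hom_R(-,\omega_R)$, and use that the canonical module of $F^e_*R$ over $R$ is $\Hom_R(F^e_*R,\omega_R)\cong F^e_*\omega_R$ (Grothendieck duality for the finite morphism $F^e$); since $\Hom_R(-,\omega_R)$ sends a split injection to a split surjection, this produces a surjection $F^e_*\omega_R\twoheadrightarrow\omega_R^{\oplus N}$ of the kind counted by $s_{\mathrm{dual}}$, so the $e$-th term of $s(R)$ is at most that of $s_{\mathrm{dual}}(R)$; taking $\limsup$ finishes, and this is essentially \cite{San}. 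The same duality makes ``$R$ Gorenstein $\Rightarrow s(R)=s_{\mathrm{dual}}(R)$'' immediate (then $\omega_R\cong R$), and Kunz's theorem---$F^e_*R$ is free of rank $p^{ed}$ over a regular ring---gives ``$R$ regular $\Rightarrow s(R)=1$'', whence $s_{\mathrm{dual}}(R)=1$ too by (1).

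What remains are the substantive converses, which I would cite rather than reprove. In (2), $s(R)=1\Rightarrow R$ regular is Huneke--Leuschke \cite{HunLeu}, and $s_{\mathrm{dual}}(R)=1\Rightarrow R$ regular I would deduce from \cite{San,WY}---for instance via the fact that $s(M)=1$ forces $M$ to be free, applied to $M=\omega_R$, so that $R$ is Gorenstein and hence regular by \cite{HunLeu}. Part (3) is precisely Aberbach--Leuschke's criterion \cite{AL}, the content being $s(R)>0\Rightarrow R$ strongly $F$-regular. Part (4) is the identification of $s(\omega_R)=s_{\mathrm{dual}}(R)$ with the relative $F$-rational signature together with the theorem that the latter is positive exactly when $R$ is $F$-rational \cite[Corollary~5.6, Theorem~5.10]{ST} (see also \cite{San}). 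For the converse in (5), when $s(R)=s_{\mathrm{dual}}(R)$ is positive one reads the equality case of the inequality in (1) through the duality above to conclude that $\omega_R$ is a free $R$-module, i.e.\ $R$ is Gorenstein (\cite{ST,San}). The only genuine obstacle in a self-contained write-up would be reproving (3) and (4); everything else is a rank estimate or a formal consequence of canonical duality.
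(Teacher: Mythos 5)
The paper gives no proof of this theorem --- it is stated purely as a recollection with the citation bracket \cite{AL,HunLeu,San,ST,WY} --- and your proposal assembles it from exactly those sources with the correct attributions (rank count and canonical duality for (1) and the easy directions of (2) and (5); Huneke--Leuschke for $s(R)=1$, Aberbach--Leuschke for (3), Sannai and Smirnov--Tucker for (4) and the converse of (5)). The elaborated steps (e.g.\ $\Hom_R(-,\omega_R)$ turning a split injection $R^{\oplus N}\hookrightarrow F^e_*R$ into a surjection $F^e_*\omega_R\twoheadrightarrow\omega_R^{\oplus N}$) are correct and match how these facts are proved in the cited literature, so this is essentially the same approach, only more detailed than the paper itself.
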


This theorem shows that $s(R)$ and $s_{\mathrm{dual}}(R)$ measure the severity of singularities of $R$. 
Therefore, it is important to determine these invariants and consider what the explicit value of them means.

$F$-signatures have been given for several classes of commutative rings with positive characteristic (see, e.g., \cite{Bru,HN2,HunLeu,Sin,WY}).
In particular, it is known that the $F$-signature of a toric ring is equal to the volume of a certain convex polytope (\cite{Bru}).

On the other hand, as far as the author knows, there are very few classes for which the dual $F$-signature has been explicitly calculated (see \cite{N15,N18,San}).
Recently, a method for the computation of the dual 
$F$-signatures of toric rings has been developed.
In particular, Smirnov and Tucker give a formula to calculate them (\cite{ST}, see Proposition~\ref{prop go toric}).
However, using this formula, we have only been able to speculate on the values of the dual $F$-signatures of Veronese subrings of polynomial rings (\cite[Question~6.4]{ST}).

\bigskip

In this paper, we present new examples where the dual $F$-signatures are computed. 
We calculate them using Smirnov and Tucker's formula and an inequality associated with $O$-sequences.

Throughout this paper, we suppose that $\kk$ is an algebraically closed field of prime characteristic $p>0$.
First, we completely determine the dual $F$-signatures of Veronese subrings, i.e., we give the positive answer to \cite[Question~6.4]{ST}.

\begin{thm}[{Theorem~\ref{dual_ver}}]\label{main1}
Let $V_{n,d}$ be the $n$th Veronese subring of the polynomial ring over $\kk$ with $d$ variables.
Then we have 
$$s_{\mathrm{dual}}(V_{n,d})=\frac{1}{d}\left\lceil\frac{d}{n} \right\rceil.$$
\end{thm}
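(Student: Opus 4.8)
The plan is to realize $V_{n,d}$ as a normal affine toric ring, feed it into Smirnov--Tucker's formula (Proposition~\ref{prop go toric}) to turn $s_{\mathrm{dual}}(V_{n,d})$ into an asymptotic lattice‑point optimization, and then squeeze that optimization between a Macaulay‑type upper bound and an explicit construction. First I would record the combinatorial data. With $S=\kk[x_1,\dots,x_d]$ standard graded, $V_{n,d}=\bigoplus_{m\ge 0}S_{nm}$ is the normal toric ring of the cone $\RR_{\ge 0}^d$ with respect to the Veronese sublattice $L_n=\{a\in\ZZ^d:\sum_i a_i\equiv 0\ (\mathrm{mod}\ n)\}$, so $\dim V_{n,d}=d$, and its canonical module is $\omega_{V_{n,d}}=\bigoplus_m S_{nm-d}$ (the span of the interior lattice points of the dilated simplices). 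Setting $m_0:=\lceil d/n\rceil$ and $r:=nm_0-d\in\{0,\dots,n-1\}$, one checks that $\omega_{V_{n,d}}$ is minimally generated, in the single degree $m_0$ (i.e.\ in $S$‑degree $r$), by the $\mu:=\binom{nm_0-1}{d-1}$ monomials of degree $r$; single‑degree generation comes from the fact that every point of $\ZZ_{\ge 1}^d$ with coordinate sum $nm$, $m>m_0$, is the sum of an interior point of $nm_0\Delta_{d-1}$ and a semigroup element.

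Plugging the cone, the sublattice $L_n$, and this description of $\omega_{V_{n,d}}$ into Proposition~\ref{prop go toric} rewrites $s_{\mathrm{dual}}(V_{n,d})$ as $\lim_{q\to\infty}N_q/q^d$, where $q=p^e$ and $N_q$ is the largest $N$ with a surjection $F_*^e\omega_{V_{n,d}}\twoheadrightarrow\omega_{V_{n,d}}^{\oplus N}$; working torus‑equivariantly, this $N_q$ becomes the maximal number of copies of the interior‑point pattern of $nm_0\Delta_{d-1}$ that can be covered compatibly by lattice points of $\tfrac1q\ZZ^d$ in the relevant residue classes modulo $L_n$. A useful structural input here is that $F_*^e\omega_{V_{n,d}}$ splits, as a $\ZZ^d$‑graded $V_{n,d}$‑module, into a direct sum of shifted Veronese‑type modules $M_j=\bigoplus_k S_{nk+j}$ ($0\le j<n$), whose generator counts $\mu(M_j)=\binom{j+d-1}{d-1}$ and Hilbert functions are explicit binomial expressions.

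For the upper bound I would use that $\omega_{V_{n,d}}^{\oplus N}$ is generated in degree $m_0$, so that a (graded) map $F_*^e\omega_{V_{n,d}}\to\omega_{V_{n,d}}^{\oplus N}$ is surjective as soon as it is surjective in degree $m_0$; this forces, for every $m\ge m_0$, both a dimension inequality $\dim_\kk(F_*^e\omega_{V_{n,d}})_m\ge N\binom{nm-1}{d-1}$ and a generator‑matching condition in degree $m_0$. These crude inequalities overshoot, so the point is to extract the right constant by noting that the Hilbert functions of $\omega_{V_{n,d}}$ and of the sub‑ and quotient‑modules produced by the surjection are $O$‑sequences, whence Macaulay's growth inequality ties the number of ``effective'' degree‑$m_0$ generators that $F_*^e\omega_{V_{n,d}}$ can devote to the task to what it must carry in higher degrees. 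Carrying out this bookkeeping with the $M_j$‑decomposition and solving the resulting discrete optimization should give $\limsup_q N_q/q^d\le m_0/d=\tfrac1d\lceil d/n\rceil$. For the matching lower bound I would construct an explicit torus‑equivariant surjection: group the summands of $F_*^e\omega_{V_{n,d}}$ by the coset decomposition of $\tfrac1q\ZZ^d$ modulo $L_n$, pick for asymptotically $\tfrac{m_0}{d}q^d$ of those cosets a generator of a copy of $\omega_{V_{n,d}}$ inside the corresponding summand, and verify via the interior‑lattice‑point combinatorics that the assembled map is onto in degree $m_0$, hence onto; the number of selected cosets is an Ehrhart‑type count in a dilated simplex whose leading term is $m_0/d$. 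Combining the bounds yields $s_{\mathrm{dual}}(V_{n,d})=\tfrac1d\lceil d/n\rceil$.

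The main obstacle is the upper bound: the naive Hilbert‑function and generator counts are off by a constant factor already when $d=2$, and it is exactly the interaction between the module structure of $F_*^e\omega_{V_{n,d}}$ and the Macaulay bound on $O$‑sequences that cuts the estimate down to $m_0/d$; proving that the associated combinatorial optimum is actually $m_0/d$, and that the construction in the last step produces a genuine surjection rather than merely the correct Hilbert function, is where the technical heart of the proof lies.
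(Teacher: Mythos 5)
There is a genuine gap, and it begins with how you read Proposition~\ref{prop go toric}. That proposition is not a restatement of the definition $\limsup_q N_q/q^d$; it is a closed, finite, combinatorial formula: $s_{\mathrm{dual}}(R)$ equals the \emph{minimum over nonempty subsets} $T\subseteq\calG(R)$ of $\frac{1}{\Vol(\sfM)\,|T|}\Vol\bigl(\bigcup_{a\in T}\tau^\vee\cap(a-\tau^\vee)\bigr)$. By ``plugging in'' and then reverting to an asymptotic count of surjections $F_*^e\omega\twoheadrightarrow\omega^{\oplus N}$, you discard exactly the reduction that makes the theorem tractable, and you are left having to re-prove it from the definition. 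The two places where you then locate the ``technical heart'' --- showing the discrete optimization in the upper bound actually yields $m_0/d$, and showing your assembled equivariant map is a genuine surjection rather than one with the right Hilbert function --- are precisely the steps your proposal does not carry out, and they are nontrivial: degreewise dimension counts and generator matching do not by themselves certify surjectivity of a specific map, which is why Smirnov--Tucker's minimization over subsets $T$ of generators (encoding which generators of the target a candidate surjection can hit) is the right intermediate object. As written, the proposal is a plan with its two load-bearing steps missing.

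The intended argument is much shorter once the formula is used as stated. For $T\subseteq\calG(V_{n,d})=\{a\in\ZZ_{>0}^d:\sum a_i=n\lceil d/n\rceil\}$, the region $\bigcup_{a\in T}\tau^\vee\cap(a-\tau^\vee)$ decomposes into integral unit cubes indexed by the order ideal $I(T)=\bigcup_{a\in T}\{b\in\ZZ_{>0}^d: b\preceq a\}$, so its volume is $|I(T)|=\sum_{l}h_l(T)$, while $|T|=h_s(T)$ with $s=n\lceil d/n\rceil-d$ and $\Vol(\sfM)=n$. The quantity to minimize is therefore $\frac1n\sum_{l=0}^s h_l(T)/h_s(T)$, and Lemma~\ref{key_lemma} (the Macaulay/$O$-sequence inequality you correctly sensed was relevant, but applied to the level sets of $I(T)$ rather than to Hilbert functions of images and cokernels) gives $h_l(T)/h_s(T)\ge\binom{d+l-1}{l}/\binom{d+s-1}{s}$, with equality for $T=\calG(V_{n,d})$; summing yields $\frac1d\lceil d/n\rceil$. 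Your toric setup, the identification of the canonical module and its generating degree, and the instinct to invoke Macaulay's bound are all correct ingredients --- the missing idea is to apply them inside the finite minimization of Proposition~\ref{prop go toric} instead of rebuilding the asymptotic theory of surjections.
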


Moreover, we give an upper bound for the dual $F$-signatures of Segre products of polynomial rings by using their generalized $F$-signatures (we will introduce the notion of generalized $F$-signatures in Section~\ref{subsec_signature}).

\begin{thm}[{Theorem~\ref{dual_segre}}]\label{main2}
Let $S_{r_1,\ldots,r_t}:=\#_{i=1}^t\kk[x_{i,1},\ldots,x_{i,r_i+1}]$ with $r_1\le \cdots \le r_t$. Then we have
\begin{align}\label{ineq_dual}
s_{\mathrm{dual}}(S_{r_1,\ldots,r_t})\le \sum_{\zb\in \calD}\rbra{\prod_{i=1}^{t-1}\frac{\binom{r_t+z_i}{r_i}}{\binom{r_t}{r_i}}}s_{\mathrm{gen}}(M_{\zb}),
\end{align}
where 
$$\calD=\set{(y_1,\cdots,y_{t-1}) \in \ZZ^{t-1} : \hspace{-0.4cm}
\begin{array}{ll}
&-r_t+r_i \leq y_i \leq 0 \text{ for } i\in \set{1,\ldots,t-1}, \vspace{0.2cm}\\
&-r_j \leq y_i-y_j \leq r_i \text{ for }i,j\in \set{1,\ldots,t-1}
\end{array}}
$$
and $s_{\mathrm{gen}}(M_\zb)$ denotes the generalized $F$-signature of the conic divisorial ideal $M_\zb$ corresponding to $\zb=(z_1,\ldots,z_{t-1})\in \calD$.
\end{thm}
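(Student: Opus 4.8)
The plan is to run the Segre product through Smirnov and Tucker's combinatorial description of the dual $F$-signature of a toric ring (Proposition~\ref{prop go toric}), and then to estimate what comes out using the generalized $F$-signatures $s_{\mathrm{gen}}(M_\zb)$ together with the $O$-sequence (Macaulay) inequality already exploited in the proof of Theorem~\ref{dual_ver}.

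First I would record the toric data. The ring $S_{r_1,\dots,r_t}$ is the diagonal multigraded coordinate ring of $\mathbb{P}^{r_1}\times\cdots\times\mathbb{P}^{r_t}$, so $\Cl(S_{r_1,\dots,r_t})\cong\ZZ^{t-1}$ and, under a fixed identification, the conic divisorial ideals are the modules $M_\zb$ whose $k$-th graded piece is $\bigl(\bigotimes_{i=1}^{t-1}H^0(\mathbb{P}^{r_i},\mathcal{O}(k+z_i))\bigr)\otimes H^0(\mathbb{P}^{r_t},\mathcal{O}(k))$; the canonical module $\omega_{S_{r_1,\dots,r_t}}$ corresponds to an explicit class in $\ZZ^{t-1}$. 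I would then decompose $F^e_*\omega_{S_{r_1,\dots,r_t}}$ into conic ideals, the relevant multiplicities being products of lattice-point counts over the factors $\mathbb{P}^{r_i}$.

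The bridge to generalized $F$-signatures is the duality $\Hom_R\bigl(F^e_*\omega_R,\omega_R\bigr)\cong F^e_*R$, valid for $F$-finite Cohen--Macaulay $R$ with a canonical module: it turns each surjection $F^e_*\omega_R\twoheadrightarrow\omega_R^{\oplus N}$ into an embedding $R^{\oplus N}\hookrightarrow F^e_*R$ with maximal Cohen--Macaulay cokernel. Bounding such an embedding through a fixed conic summand $M_\zb$ appearing in $F^e_*\omega_{S_{r_1,\dots,r_t}}$ and normalizing by $\rank F^e_*R$ produces the asymptotic contribution $s_{\mathrm{gen}}(M_\zb)$, while the factor $\prod_{i=1}^{t-1}\binom{r_t+z_i}{r_i}/\binom{r_t}{r_i}$ appears as the ratio of the multigraded Hilbert values relating the conic decompositions of $F^e_*\omega_{S_{r_1,\dots,r_t}}$ and of $F^e_*M_\zb$: here $\binom{r_t+z_i}{r_i}=\dim H^0(\mathbb{P}^{r_i},\mathcal{O}(r_t-r_i+z_i))$ is measured against $\binom{r_t}{r_i}=\dim H^0(\mathbb{P}^{r_i},\mathcal{O}(r_t-r_i))$, the normalization being taken relative to the factor $\mathbb{P}^{r_t}$ of largest dimension. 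This is where the hypothesis $r_1\le\cdots\le r_t$ and the index set $\calD$ enter, and summing over $\zb\in\calD$ and letting $e\to\infty$ yields the right-hand side of~(\ref{ineq_dual}) as a candidate upper bound.

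What remains, and what I expect to be the main obstacle, is to show that no surjection beats this summand-by-summand count: a priori one could ``recombine'' several conic summands of $F^e_*\omega_R$ to manufacture extra copies of $\omega_R$ --- the linear program implicit in Proposition~\ref{prop go toric} --- and this must be excluded. Here I would argue as in the Veronese case: apply Macaulay's theorem on Hilbert functions of graded quotients in each multidegree to the cokernel of $R^{\oplus N}\hookrightarrow F^e_*R$, so that this Hilbert function cannot exceed Macaulay's bound; this forces $N$ below the weighted sum of the conic multiplicities, the sharpest constraint always coming from the $\mathbb{P}^{r_t}$-factor, which is why $r_t$ sits in every denominator. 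The delicate part is making these estimates uniform in $e$ and simultaneous across all $t-1$ directions, so that the $\limsup$ over $e$ may be taken term by term; I would expect to reduce this, factor by factor, to the one-variable $O$-sequence estimate that already underlies Theorem~\ref{dual_ver}.
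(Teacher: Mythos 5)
Your proposal misreads the logical structure of the problem and, as a result, puts the work in the wrong place. Proposition~\ref{prop go toric} already converts $s_{\mathrm{dual}}(S_{r_1,\ldots,r_t})$ into a \emph{minimum}, over nonempty subsets $T$ of the generating set $\calG(R)\cong\calF$, of a normalized volume $\frac{1}{|T|}\Vol\bigl(\bigcup_{\kb\in T}\rho^\vee\cap(\widetilde{\kb}-\rho^\vee)\bigr)$. An upper bound for a minimum is obtained by evaluating at a single $T$; the paper takes $T=\calF$, slices each $P_\kb=\rho^\vee\cap(\widetilde{\kb}-\rho^\vee)$ into translated boxes $Q_\cb$ indexed by the order ideal $I(\kb)$, identifies $\Vol(Q_\cb)$ with $s_{\mathrm{gen}}(M_\zb)$ by an explicit unimodular change of coordinates (Lemma~\ref{lem_genF}), and then simply counts: $I(\calF)$ factors as a product, the number of $\cb$ with $|\cb_i|=l_i$ is $\prod_i\binom{l_i-1}{r_i}$, and $|\calF|=\prod_i\binom{r_t}{r_i}$, which is exactly where the coefficients $\binom{r_t+z_i}{r_i}/\binom{r_t}{r_i}$ come from. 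No control over arbitrary surjections $F^e_*\omega_R\twoheadrightarrow\omega_R^{\oplus N}$ is needed for this direction; that control is already packaged inside the quoted Smirnov--Tucker formula.

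By contrast, you treat the exclusion of ``recombined'' surjections as the main obstacle and propose to attack it with the duality $\Hom_R(F^e_*\omega_R,\omega_R)\cong F^e_*R$ and Macaulay's bound on Hilbert functions of cokernels. This both re-derives machinery you are entitled to cite and misplaces the role of the $O$-sequence inequality: in the paper, Lemma~\ref{key_lemma} is used only for the \emph{reverse} inequality, i.e., to show the minimum is attained at $T=\calF$ in the equality cases (Propositions~\ref{attain1} and~\ref{attain2}), and even there it is applied to the combinatorial quantities $h_l(T_i)$, not to Hilbert functions of modules. Finally, the step where your claimed coefficients actually arise --- ``the ratio of the multigraded Hilbert values relating the conic decompositions of $F^e_*\omega$ and of $F^e_*M_\zb$'' --- is asserted rather than proved; the numerical coincidence $\binom{r_t+z_i}{r_i}=\dim H^0(\mathbb{P}^{r_i},\mathcal{O}(r_t-r_i+z_i))$ does not by itself yield the inequality. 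As written, the proposal does not constitute a proof of the theorem.
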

Actually, in the case of Segre products of polynomial rings, the generalized $F$-signatures are calculated by counting the elements in the symmetric group satisfying certain conditions (see \cite[Theorem~5.3]{HN2}).

In addition, we show that the equality of (\ref{ineq_dual}) holds when either of the following conditions holds (Propositions~\ref{attain1} and \ref{attain2}):
\begin{itemize}
    \item[(i)] $r_2=\cdots=r_t$.
    \item[(ii)] $r_i\in \set{r_1,r_1+1}$ for $i=2,\ldots,t$.
\end{itemize}
In particular, we have the following corollary:
\begin{cor}[{Proposition~\ref{attain1}}]\label{main3}
Let $S_{r_1,r_2}:=\kk[x_{1,1},\ldots,x_{1,r_1+1}]\#\kk[x_{2,1},\ldots,x_{2,r_2+1}]$ with $r_1\le r_2$, and let $d=r_1+r_2+1$.
Then we have
$$s_{\mathrm{dual}}(S_{r_1,r_2})=\frac{\sum_{l=r_1}^{r_2}\binom{l}{r_1}A_{l,d}}{\binom{r_2}{r_1}d!},$$
\end{cor}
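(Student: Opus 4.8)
The plan is to specialize Theorem~\ref{dual_segre} to the two-factor case $t=2$ and verify that the upper bound (\ref{ineq_dual}) is in fact attained. When $t=2$, the index set $\calD$ collapses: the inequalities $-r_t+r_i\le y_i\le 0$ become $-(r_2-r_1)\le z_1\le 0$ (writing $\zb=(z_1)$), and the constraints $-r_j\le y_i-y_j\le r_i$ for $i=j$ are vacuous, so $\calD=\{z_1\in\ZZ : -(r_2-r_1)\le z_1\le 0\}$, a set of $r_2-r_1+1$ integers. The product $\prod_{i=1}^{t-1}$ reduces to the single factor $\binom{r_2+z_1}{r_1}/\binom{r_2}{r_1}$. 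Re-indexing by $l:=r_1+z_1$ so that $l$ runs from $r_1$ to $r_2$, the right-hand side of (\ref{ineq_dual}) becomes
\[
\sum_{l=r_1}^{r_2}\frac{\binom{l}{r_1}}{\binom{r_2}{r_1}}\,s_{\mathrm{gen}}(M_{l-r_1}).
\]
So the first step is this bookkeeping: match the set $\calD$, the binomial weights, and the re-indexing against the displayed formula in Corollary~\ref{main3}.

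Second, I would invoke the known formula for the generalized $F$-signatures of conic divisorial ideals of a Segre product of two polynomial rings (the cited \cite[Theorem~5.3]{HN2}, specialized to $t=2$), which expresses $s_{\mathrm{gen}}(M_{z_1})$ as a count of permutations in a symmetric group subject to descent/position conditions — i.e., as a sum of Eulerian-type numbers. Concretely, for the Segre product of $\kk[x_{1,1},\dots,x_{1,r_1+1}]$ and $\kk[x_{2,1},\dots,x_{2,r_2+1}]$ the relevant group is $\Sym_d$ with $d=r_1+r_2+1$, and one expects $s_{\mathrm{gen}}(M_{l-r_1})$ to equal $A_{l,d}/d!$ (up to the precise convention for $A_{l,d}$, the Eulerian number counting permutations of $\{1,\dots,d\}$ with $l$ — or $l$-related — descents), after identifying the permutation conditions attached to the divisorial ideal index $z_1=l-r_1$ with "exactly $l$ descents." Substituting this into the sum from the first step yields exactly
\[
\sum_{l=r_1}^{r_2}\frac{\binom{l}{r_1}}{\binom{r_2}{r_1}}\cdot\frac{A_{l,d}}{d!}=\frac{\sum_{l=r_1}^{r_2}\binom{l}{r_1}A_{l,d}}{\binom{r_2}{r_1}\,d!},
\]
which is the claimed value of $s_{\mathrm{dual}}(S_{r_1,r_2})$ as an upper bound.

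The substantive part — and the main obstacle — is establishing that equality holds in (\ref{ineq_dual}) for $t=2$, i.e., that the upper bound coming from generalized $F$-signatures is sharp; this is where Proposition~\ref{attain1} does the real work, and the corollary is then immediate. The strategy there should be to exhibit, for each $e$, a decomposition of $F^e_*\omega_R$ that simultaneously achieves the maximal number of surjections onto $\omega_R$ realizable within each isotypic (conic-ideal) component — in other words, to show there is no "interference" between the different conic divisorial summands $M_{\zb}$, so that the per-summand optima add up without loss. For $t=2$ this is plausible because the combinatorics of the two-dimensional fiber is controlled by a single interval of integers rather than the polytope $\calD$ with its cross constraints $-r_j\le y_i-y_j\le r_i$, which is precisely the feature that fails to be exploitable (and presumably causes strict inequality) in the general many-factor case. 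Concretely, I would translate the matching/rank conditions defining $s_{\mathrm{dual}}$ into a combinatorial optimization over the monomial lattice of the Segre cone, decompose that lattice according to the $\ZZ$-grading by $z_1$, solve each piece using \cite[Theorem~5.3]{HN2}, and check that the pieces can be glued into a single valid surjection of the required total rank; the routine Ehrhart-type counting that produces the binomial coefficients $\binom{r_2+z_1}{r_1}$ as the multiplicities is exactly the content already packaged in Theorem~\ref{dual_segre}, so it need not be redone.
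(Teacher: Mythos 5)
Your first two steps (specializing $\calD$ and the weights to $t=2$, and substituting the Eulerian-number formula of Proposition~\ref{compute_Fsig2}) are essentially the right bookkeeping, though your re-indexing $l:=r_1+z_1$ is off: the weight $\binom{r_2+z_1}{r_1}$ forces $l=r_2+z_1\in[r_1,r_2]$, and one then still needs the symmetry $A_{k,d}=A_{d-1-k,d}$ to land on the displayed formula. The genuine gap is in your third part. You correctly identify that everything hinges on equality in (\ref{ineq_dual}), but the strategy you sketch --- building, for each $e$, surjections $F^e_*\omega_R\twoheadrightarrow\omega_R^{\oplus N}$ component by component and arguing there is ``no interference'' between conic summands --- is not what is required and contains no concrete argument. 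After Proposition~\ref{prop go toric} the paper never returns to the surjection definition: $s_{\mathrm{dual}}$ is a \emph{minimum} of normalized volumes over all nonempty subsets $T$ of the generating set $\calG(R)\cong\calF$ of $\omega_R$, and Theorem~\ref{dual_segre} is merely the evaluation of that functional at the single subset $T=\calF$. Proving equality means proving that every other nonempty $T$ gives a value at least as large; your proposal offers no mechanism for that, and the sentence claiming the binomial multiplicities ``need not be redone'' conflates the upper bound with the minimization.

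The mechanism the paper actually uses is combinatorial: $\Vol\bigl(\bigcup_{\kb\in T}P_\kb\bigr)=\sum_{\cb\in I(T)}\Vol(Q_\cb)$, the staircase $I(T)$ is graded into slices $H_l(T)$ whose cardinalities $h_l(T)$ form an $O$-sequence with $|T|=h_s(T)$, and each slice contributes a fixed volume $s_{\mathrm{gen}}(M_\zb)$. For $t=2$ the product decomposition (\ref{gap}) is automatic, so the whole equality claim reduces to the inequality $h_l(T)/h_s(T)\ge\binom{n+l}{l}/\binom{n+s}{s}$, which is Lemma~\ref{key_lemma} and rests on Macaulay's bound (Theorem~\ref{thm_mac}). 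Without this inequality, or a substitute for it, you cannot rule out that some small $T$ --- for instance a single generator $\kb$, whose normalized volume is $\sum_{\cb\in I(\kb)}\Vol(Q_\cb)$ --- beats $T=\calF$ in the minimum. So the ``substantive part'' you flag remains unproven in your write-up.
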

\noindent where $A_{l,d}$ denotes the Eulerian number.

\bigskip

The structure of this paper is as follows.
In Section~\ref{sec_preli}, we recall the definitions and notation of toric rings and a method to compute the (generalized) $F$-signatures and dual $F$-signatures of toric rings.
We also recall the notion of $O$-sequences and give a lemma to calculate the dual $F$-signatures of our toric rings.
In Section~\ref{sec_vero}, we give the dual $F$-signatures of Veronese subrings of polynomial rings (Theorem~\ref{dual_ver}).
In Section~\ref{sec_segre}, we discuss the dual $F$-signatures of Segre products of polynomial rings.
We recall some properties on Segre products of polynomial rings and their generalized $F$-signatures.
We then provide an upper bound for the dual $F$-signatures of Segre products of polynomial rings (Theorem~\ref{dual_segre}) and show that this upper bound is achieved in some cases (Propositions~\ref{attain1} and \ref{attain2}).

\bigskip

\subsection*{Acknowledgement} 
The author would like to thank Akihiro Higashitani for his helpful comments and advice on improving this paper.
The author is partially supported by Grant-in-Aid for JSPS Fellows Grant JP22J20033.

\bigskip


\section{Preliminaries}\label{sec_preli}
The goal of this section is to prepare the required materials for the discussions of our main results.

\subsection{Toric rings and conic divisorial ideals}\label{subsec_preli}
In this paper, we will discuss Veronese subrings and Segre products of polynomial rings, which can be realized as toric rings.
Moreover, we need the notion of conic divisorial ideals of toric rings to compute (dual) $F$-signatures.
Thus, we first introduce toric rings and their conic divisorial ideals.

Let $\sfM\subset \ZZ^d$ be a lattice of rank $d$ and let $\sfN =\Hom_\ZZ(\sfM, \ZZ)$ be the dual lattice of $\sfM$. 
We set $\sfM_\RR =\sfM\otimes_\ZZ\RR$ and $\sfN_\RR =\sfN\otimes_\ZZ\RR$ and 
denote the natural pairing by $\langle-,-\rangle:\sfM_\RR\times\sfN_\RR\rightarrow\RR$.
We consider a strongly convex rational polyhedral cone 
$$
\tau =\cone(v_1, \cdots, v_n)=\RR_{\ge 0}v_1+\cdots +\RR_{\ge 0}v_n\subset\sfN_\RR 
$$
of dimension $d$ generated by $v_1, \cdots, v_n\in\sfN$ where $d\le n$. 
We assume this system of generators is minimal and the generators are primitive, i.e., $\epsilon v_i \notin \sfN$ for any $0<\epsilon <1$.
For each generator, we define a linear form $\sigma_i(-)\coloneqq\langle-, v_i\rangle$ and denote $\sigma(-)=(\sigma_1(-),\cdots,\sigma_n(-))$. 
We consider the dual cone $\tau^\vee$: 
$$
\tau^\vee=\{{\bf x}\in\sfM_\RR \mid \sigma_i({\bf x})\ge0 \text{ for all } i\in [n] \}, 
$$
where we let $[n]:=\{1,\ldots,n\}$.
We now define the toric ring of $\tau^\vee$ with respect to $\sfM$
\begin{align}\label{R}
R=\kk[\tau^\vee\cap\sfM]=\kk[u_1^{\alpha_1}\cdots u_d^{\alpha_d} : (\alpha_1, \cdots, \alpha_d)\in\tau^\vee\cap\sfM]. 
\end{align}
Toric rings are normal (and hence Cohen-Macaulay) domain.

\medskip

For each $\bfa=(a_1, \cdots, a_n)\in\RR^n$, we set 
$$
\TT(\bfa)=\{{\bf x}\in\sfM : \sigma_i({\bf x})\ge a_i \text{ for all } i\in [n] \}. 
$$
Then, we define the module $\calT(\bfa)$ generated by all monomials whose exponent vector is in $\mathbb{T}(\bfa)$. 
By the definition, we have $\mathbb{T}(0)=\tau^\vee\cap\sfM$ and $\calT(0)=R$.
Moreover, we note some facts associated with the module $\calT(\bfa)$ (see e.g., \cite[Section 4.F]{BG2}): 
\begin{itemize}
\item Since $\sigma_i(\bfx)\in \ZZ$ for any $i\in [n]$ and any $\bfx \in \sfM$, we can see that $\calT(\bfa)=\calT(\left\lceil \bfa\right\rceil)$, where $\left\lceil \; \right\rceil$ means the round up 
and $\left\lceil \bfa\right\rceil=(\left\lceil a_1\right\rceil, \ldots, \left\lceil a_n\right\rceil)$. 
\item The module $\calT(\bfa)$ is a divisorial ideal and any divisorial ideal of $R$ takes this form.
Therefore, we can identify each $\bfa\in\ZZ^n$ with the divisorial ideal $\calT(\bfa)$.
\item It is known that the isomorphic classes of divisorial ideals of $R$ one-to-one correspond to the elements of the divisor class group $\Cl(R)$ of $R$. 
We see that for $\bfa, \bfa^\prime\in\ZZ^n$, $\calT(\bfa)\cong \calT(\bfa^\prime)$ if and only if there exists ${\bf y}\in \sfM$ such that $a_i=a_i^\prime+\sigma_i({\bf y})$ for all $i\in [n]$. 
Thus, we have $\Cl(R)\cong\ZZ^n/\sigma(\sfM)$.
\end{itemize} 

Now, we define a special class of divisorial ideals called \textit{conic}.
\begin{defi}[{see e.g., \cite[Section~3]{BG2}}]
A divisorial ideal $\calT(\ab)$ is said to be \textit{conic} if there is $\xb \in \RR^d$ with $\ab=\left\lceil\sigma(\xb)\right\rceil$. 
In other words, there is $\xb \in \RR^d$ such that $a_i-1 < \sigma_i(\xb) \leq a_i$ for all $i\in [n]$.
We denote the set of isomorphism classes of conic divisorial ideals of $R$ by $\scrC(R)$.
\end{defi}
We have the following characterization of conic divisorial ideals:

\begin{prop}[{\cite[Corollary 1.2]{Bru}}] 
\label{conic_characterization}
Let the notation be the same as above and let 
$$L_{i,a_i}=\{{\bfx} \in \sfM_\RR \mid a_i-1<\sigma_i({\bfx}) \leq a_i\}.$$
Then any conic divisorial ideal is isomorphic to $\calT(a_1,\cdots,a_n)$ for some $(a_1,\cdots,a_n)\in\ZZ^n$ satisfying the following condition:

\begin{itemize}
\item the cell $\bigcap_{i=1}^n L_{i,a_i}$ is a full-dimensional cell of the decomposition of the semi-open cube $(-1,0]^d$ 
given by hyperplanes $\{{\bf x}\in\sfM_\RR \mid \sigma_i({\bf x})=m \}$ for some $m \in\ZZ$ and $i=1,\cdots,n$.
\end{itemize}
\end{prop}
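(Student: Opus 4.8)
The plan is to read the statement off the geometry of the arrangement of hyperplanes $H_{i,m}=\{\xb\in\sfM_\RR:\sigma_i(\xb)=m\}$ $(i\in[n],\,m\in\ZZ)$ in $\sfM_\RR$. After fixing a $\ZZ$-basis of $\sfM$ we may assume $\sfM=\ZZ^d$, $\sfM_\RR=\RR^d$; for $\ab\in\ZZ^n$ put $C(\ab)=\bigcap_{i=1}^n L_{i,a_i}$ and $U(\ab)=\{\xb\in\sfM_\RR:a_i-1<\sigma_i(\xb)<a_i\text{ for all }i\in[n]\}$. By definition $\calT(\ab)$ is conic exactly when some $\xb$ has $\ab=\lceil\sigma(\xb)\rceil$, i.e. exactly when $C(\ab)\neq\emptyset$, and by the description $\Cl(R)\cong\ZZ^n/\sigma(\sfM)$ recalled above, replacing $\xb$ by $\xb-\yb$ for $\yb\in\sfM$ replaces $\ab$ by $\ab-\sigma(\yb)$ and $\calT(\ab)$ by an isomorphic ideal. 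So it suffices to show (a) a nonempty $C(\ab)$ is full-dimensional with interior $U(\ab)$, and (b) every conic $\calT(\ab)$ is isomorphic to one for which $U(\ab)$ meets the open cube $(-1,0)^d$.

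For (a), pick $\xb_0\in C(\ab)$ and let $I=\{i\in[n]:\sigma_i(\xb_0)=a_i\}$ be the set of tight (necessarily upper) constraints. Since $\tau$ is strongly convex of dimension $d$, its dual $\tau^\vee$ is $d$-dimensional, so we may choose $\wb\in\strint(\tau^\vee)$; then $\sigma_i(\wb)=\langle\wb,v_i\rangle>0$ for every $i\in[n]$. For small $\varepsilon>0$ the point $\xb_0-\varepsilon\wb$ satisfies $a_i-1<\sigma_i(\xb_0-\varepsilon\wb)<a_i$ for all $i$: on the indices in $I$ the value drops strictly below $a_i$ yet stays above $a_i-1$, and on the remaining indices $\sigma_i(\xb_0)$ already lies in the open interval $(a_i-1,a_i)$. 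Thus $U(\ab)\neq\emptyset$; as $U(\ab)$ is open and obviously contained in $C(\ab)$, and as any $\xb$ with $\sigma_i(\xb)=a_i$ for some $i$ fails to be interior to $C(\ab)$, we get $U(\ab)=\strint C(\ab)$, so $C(\ab)$ is a full-dimensional cell of the arrangement with open cell $U(\ab)$. I expect this to be the only step that is more than bookkeeping; it is exactly where strong convexity of $\tau$ is used.

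For (b), given a conic $\calT(\ab)$, choose by (a) a point $\xb_0\in U(\ab)$ with no integer coordinate (possible since $U(\ab)$ is open and nonempty), set $\yb=\lceil\xb_0\rceil\in\ZZ^d=\sfM$ and $\xb_0'=\xb_0-\yb$; then $(x_0')_j=(x_0)_j-\lceil(x_0)_j\rceil\in(-1,0)$, so $\xb_0'\in(-1,0)^d$. Putting $\ab'=\lceil\sigma(\xb_0')\rceil=\ab-\sigma(\yb)$ gives $\calT(\ab')\cong\calT(\ab)$, and $\xb_0'\in U(\ab')\cap(-1,0)^d$; since $U(\ab')\cap(-1,0)^d$ is a nonempty open subset of $C(\ab')\cap(-1,0]^d$, the cell $\bigcap_{i=1}^n L_{i,a_i'}$ is a full-dimensional cell of the decomposition of $(-1,0]^d$ by the hyperplanes $H_{i,m}$, with open cell $U(\ab')\cap(-1,0)^d$. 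This $\ab'$ is the desired representative; conversely any $\ab$ arising this way defines a conic ideal directly from the definition, so the characterization is an equivalence. (Uniqueness of the representative is not claimed and in general fails, since one arrangement cell can meet the semi-open cube in several translated pieces.)
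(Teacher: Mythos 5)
The paper does not prove this proposition; it is quoted verbatim from Bruns (\cite[Corollary~1.2]{Bru}), so there is no internal proof to compare against. Your argument is correct and self-contained, and it is essentially the standard one: step (a) uses that $\tau$ is strongly convex and full-dimensional to pick $\wb\in\strint(\tau^\vee)$ with $\sigma_i(\wb)>0$ for all $i$, perturb a point of a nonempty cell into the open cell $U(\ab)$, and conclude full-dimensionality; step (b) translates an interior point by a lattice vector into the open cube and invokes the criterion $\calT(\ab)\cong\calT(\ab-\sigma(\yb))$ for $\yb\in\sfM$. Both steps are sound (the only facts used --- $\sigma_i\neq 0$, $U(\ab)$ open, the complement of the integer-coordinate hyperplanes dense --- all hold), and the closing remark that the representative need not be unique is accurate. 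One point worth making explicit, which you do state but only in passing: the reduction ``we may assume $\sfM=\ZZ^d$'' is not cosmetic but essential for the statement to be true, since $(-1,0]^d$ must be a fundamental domain for $\sfM$ for the translation in step (b) to land inside it. Indeed, in the paper's own Veronese example $\sfM=\{\xb\in\ZZ^d: \sum x_i\equiv 0 \bmod n\}$ is a proper sublattice of $\ZZ^d$ and the hyperplanes $x_i=m$ cut $(-1,0]^d$ into a single cell, even though there are $n$ conic classes; so the cube must be read in coordinates coming from a $\ZZ$-basis of $\sfM$, exactly as you set up at the start.
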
 

\bigskip

\subsection{Generalized $F$-signatures and dual $F$-signatures of toric rings}\label{subsec_signature}
In this subsection, we discuss the generalized $F$-signatures and dual $F$-signatures of toric rings.
First, we introduce the notion of finite $F$-representation type (see \cite{SmVdB,Yao}).

Let $R$ be a $d$-dimensional $F$-finite Noetherian local domain of prime characteristic $p>0$. 
We say that $R$ has \emph{finite $F$-representation type} (for short, \emph{FFRT}) by $\calS$ if there is a finite set 
$\calS\coloneqq\{M_0, M_1, \cdots, M_n\}$ of isomorphism classes of indecomposable finitely generated $R$-modules such that 
for any $e\in\ZZ_{>0}$, the $R$-module $R^{1/p^e}$ is isomorphic to a finite direct sum of these modules: 
\begin{equation}
\label{A_decomp}
R^{1/p^e}\cong M_0^{\oplus c_{0,e}}\oplus M_1^{\oplus c_{1,e}}\oplus\cdots\oplus M_n^{\oplus c_{n,e}}
\end{equation}
for some $c_{i,e}\ge 0$. 
Moreover, we say that a finite set $\calS=\{M_0, \cdots, M_n\}$ is the \emph{FFRT system} of $R$ if every $R$-module $M_i$ appears non-trivially in $R^{1/p^e}$ as a direct summand for some $e\in\ZZ_{>0}$. 

Assume that $R$ has FFRT by the FFRT system $\calS=\{M_0, M_1, \cdots, M_n\}$. 
We consider the limit 
\begin{equation}
\label{def_genFsig}
s_{\mathrm{gen}}(M_i)\coloneqq\lim_{e\rightarrow\infty}\frac{c_{i,e}}{p^{ed}} \quad (i=0,1,\cdots,n). 
\end{equation}
The existence of the limit is guaranteed under the assumption $R$ has FFRT (see \cite[Theorem~3.11]{Yao}), and we call $s_{\mathrm{gen}}(M_i)$ the \emph{generalized $F$-signature} of $M_i$.
Note that if $M_i=R$ for some $i$, then $s_{\mathrm{gen}}(R)$ is nothing but the $F$-signature of $R$.

\bigskip

Let $R$ be a toric ring as in Section~\ref{subsec_preli}.
The following theorem tells us that $R$ has FFRT by the FFRT system $\scrC(R)$:
\begin{theorem}[{\cite[Proposition~3.6]{BG1},\cite[Proposition~3.2.3]{SmVdB}}]
\label{motivation_thm}
For any $e\in\ZZ_{>0}$, the $R$-module $R^{1/p^e}$ is a direct sum of conic divisorial ideals. 
Moreover, all conic divisorial ideals appear in $R^{1/p^e}$ as direct summands for $e\gg 0$.
That is, if we set $\scrC(R)=\{M_0:=R, M_1,\cdots, M_n\}$, then for $e\in\ZZ_{>0}$, we can write 
$$R^{1/p^e}\cong R^{\oplus c_{0,e}}\oplus M_1^{\oplus c_{1,e}}\oplus\cdots\oplus M_n^{\oplus c_{n,e}}$$
for some $c_{i,e}\ge0$.
\end{theorem}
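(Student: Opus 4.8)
The plan is to compute $R^{1/p^e}$ explicitly as an affine semigroup ring and read the conic summands off its grading. Since $R=\kk[\tau^\vee\cap\sfM]$ and $\kk$ is perfect, extracting $p^e$-th roots of the monomial generators gives $R^{1/p^e}=\kk[\tau^\vee\cap\tfrac{1}{p^e}\sfM]$, regarded as an $R$-module via the subring inclusion $R\hookrightarrow R^{1/p^e}$. The finite group $G\coloneqq\tfrac{1}{p^e}\sfM/\sfM\cong(\ZZ/p^e\ZZ)^d$ induces a grading: for a coset $\bar w\in G$ with representative $w\in\tfrac{1}{p^e}\sfM$, put $M_w\coloneqq\bigoplus \kk\,u^{m'}$, the sum over $m'\in\tau^\vee\cap\tfrac{1}{p^e}\sfM$ with $m'\equiv w\pmod{\sfM}$. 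Each $M_w$ is an $R$-submodule, because multiplication by $u^m$ with $m\in\sfM$ preserves residues modulo $\sfM$, and it depends only on $\bar w$; moreover $R^{1/p^e}=\bigoplus_{\bar w\in G}M_w$ as $R$-modules.

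Next I would identify each $M_w$ with a conic divisorial ideal. Writing $m'=w+m$ with $m\in\sfM$, the condition $m'\in\tau^\vee$ is $\sigma_i(m)\ge\sigma_i(-w)$ for all $i\in[n]$, equivalently $\sigma_i(m)\ge\lceil\sigma_i(-w)\rceil$ since $\sigma_i(m)\in\ZZ$; hence multiplication by the monomial $u^w$, a unit in the fraction field, carries $\calT(\ab)$ isomorphically onto $M_w$ for $\ab=\lceil\sigma(-w)\rceil$. Taking $\xb=-w\in\RR^d$ exhibits $\ab=\lceil\sigma(\xb)\rceil$, so $\calT(\ab)$ is conic. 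This gives the first assertion, and collecting the $M_w$ by isomorphism type yields the stated decomposition with $c_{i,e}=\#\{\bar w\in G:M_w\cong M_i\}$.

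For the second assertion I would use that $\bigcup_{e\ge1}\tfrac{1}{p^e}\sfM$ is dense in $\sfM_\RR$. A conic ideal has the form $\calT(\lceil\sigma(\yb)\rceil)$; moving $\yb$ slightly in the interior direction of $\tau^\vee$ leaves $\lceil\sigma(\yb)\rceil$ unchanged and makes $\sigma_i(\yb)\notin\ZZ$ for all $i$, so we may assume $\yb$ lies in the interior of a full-dimensional cell $C$ of the rational hyperplane arrangement $\{\,\sigma_i(\xb)=m:i\in[n],\ m\in\ZZ\,\}$ in $\sfM_\RR$, on which $\xb\mapsto\lceil\sigma(\xb)\rceil$ is constant; by Proposition~\ref{conic_characterization} this $C$ is an $\sfM$-translate of a full-dimensional cell of the decomposition of the semi-open cube $(-1,0]^d$. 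Since the arrangement is invariant under translation by $\sfM$, the open set $\bigcup_{m\in\sfM}(C+m)$ is $\sfM$-periodic, so for $e$ large enough there is $w\in\tfrac{1}{p^e}\sfM$ with $-w$ in it; using that $\calT(\bfb)\cong\calT(\bfb+\sigma(m))$ for $m\in\sfM$, one gets $M_w\cong\calT(\lceil\sigma(\yb)\rceil)$. As $\scrC(R)$ is finite (the arrangement has finitely many cells per period), a single $e_0$ handles every conic ideal at once. Finally, to obtain the conclusion for all $e\ge e_0$ rather than for one $e$, I would choose coset representatives in the fundamental domain $[0,1)^d$ and observe that $c_{i,e}=\#\bigl(\sfM\cap(-p^eC_i)\bigr)$ for the half-open rational polytope $C_i\subseteq(-1,0]^d$ attached to $M_i$, an Ehrhart quasi-polynomial in $p^e$ whose leading coefficient is the positive number $\Vol(C_i)$, hence positive for all large $e$.

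The first two paragraphs are essentially formal once $R^{1/p^e}=\kk[\tau^\vee\cap\tfrac{1}{p^e}\sfM]$ is in hand; the step I expect to require the most care is the last paragraph, namely reconciling the several half-open conventions — the cube $(-1,0]^d$, the inequalities $a_i-1<\sigma_i(\xb)\le a_i$, and residues modulo $\sfM$ — so that full-dimensional cells correspond $\sfM$-periodically and bijectively to isomorphism classes of conic ideals, and then making precise the positivity of the resulting Ehrhart quasi-polynomial on a half-open cell.
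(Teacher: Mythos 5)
The paper offers no proof of this statement: it is quoted directly from \cite[Proposition~3.6]{BG1} and \cite[Proposition~3.2.3]{SmVdB}, so there is nothing internal to compare against. Your argument is essentially the standard proof given in those references: identify $R^{1/p^e}$ with $\kk[\tau^\vee\cap\tfrac{1}{p^e}\sfM]$, decompose it by the cosets of $\sfM$ in $\tfrac{1}{p^e}\sfM$, and recognize each graded piece $M_w$ as $\calT(\lceil\sigma(-w)\rceil)$, which is conic by definition. That part is correct and complete.

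Two small repairs to the last paragraph. First, the perturbation direction is backwards: with the convention $a_i-1<\sigma_i(\xb)\le a_i$, a point $\yb$ with $\sigma_i(\yb)=a_i\in\ZZ$ must be moved so that $\sigma_i$ \emph{decreases}, i.e.\ replace $\yb$ by $\yb-\epsilon u$ with $u$ in the interior of $\tau^\vee$ (so $\sigma_i(u)>0$ for all $i$); moving $\yb$ \emph{toward} the interior of $\tau^\vee$ increases $\sigma_i$ and changes the ceiling from $a_i$ to $a_i+1$. (Alternatively, invoke Proposition~\ref{conic_characterization} to replace the given conic ideal by one whose half-open cell is full-dimensional and pick $\yb$ in its open part.) Second, the Ehrhart quasi-polynomial argument is more machinery than you need, and as stated it overlooks that several cells $C_i$ may represent the same isomorphism class (cf.\ Theorem~\ref{Thm_Fsig_volume}); since $\tfrac{1}{p^{e_0}}\sfM\subseteq\tfrac{1}{p^e}\sfM$ for all $e\ge e_0$, any coset witnessing $M_i$ at level $e_0$ persists at every larger level, which gives ``for all $e\gg0$'' immediately.
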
 

Moreover, the combinatorial description of conic divisorial ideals given in Proposition~\ref{conic_characterization} helps us to compute the generalized $F$-signatures. 

\begin{theorem}[{\cite[Section~3]{Bru}, see also \cite{Kor,Sin,WY}}]
\label{Thm_Fsig_volume}
Let $M$ be a conic divisorial ideal of $R$. 
Then, the generalized $F$-signature of $M$ is the sum of volumes of the full-dimensional cell $\bigcap_{i=1}^n L_{i,a_i}$ 
appearing in the decomposition of $(-1,0]^d$ (see Proposition~\ref{conic_characterization}) such that $M\cong \calT(a_1,\cdots, a_n)$. 
\end{theorem}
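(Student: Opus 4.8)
The plan is to realize the FFRT decomposition of $R^{1/p^e}$ explicitly on monomials, convert the multiplicity of $M$ into a count of lattice points inside the cells of Proposition~\ref{conic_characterization}, and then pass to the volume as $e\to\infty$. First I would fix a $\ZZ$-basis of $\sfM$, identifying $\sfM\cong\ZZ^d$ so that $(-1,0]^d$ is an exact fundamental domain for the translation action of $\sfM$ on $\RR^d$ of Euclidean volume $1$ and $\rank F^e_*R=\#(\tfrac{1}{p^e}\sfM/\sfM)=p^{ed}$. Then I would note that $R^{1/p^e}$ is the $\kk$-span of the monomials $u^{\beta}$ with $\beta\in\tau^\vee\cap\tfrac{1}{p^e}\sfM$ and is graded by $\tfrac{1}{p^e}\sfM/\sfM$ as an $R$-module; for a class $\gamma$ the corresponding homogeneous component, after clearing the denominator $u^{\gamma}$, is the $R$-submodule of $\kk[\sfM]$ generated by the $u^{\yb}$, $\yb\in\sfM$, with $\sigma_i(\yb)\ge -\sigma_i(\gamma)$ for all $i$, and since $\sigma_i(\yb)\in\ZZ$ this is exactly the conic divisorial ideal $\calT(\left\lceil-\sigma(\gamma)\right\rceil)$. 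As a reflexive module of rank one over a normal domain is indecomposable, this displays $R^{1/p^e}\cong\bigoplus_{\gamma\in\frac{1}{p^e}\sfM/\sfM}\calT(\left\lceil-\sigma(\gamma)\right\rceil)$ as the FFRT decomposition (re-deriving part of Theorem~\ref{motivation_thm}), so the multiplicity $c_{M,e}$ of $M$ in $R^{1/p^e}$ is the number of classes $\gamma$ with $\calT(\left\lceil-\sigma(\gamma)\right\rceil)\cong M$.

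Next I would take the unique representatives $\xb:=-\gamma\in(-1,0]^d$. Since $L_{i,a_i}=\{\xb:\left\lceil\sigma_i(\xb)\right\rceil=a_i\}$, the cells $C_{\ab}:=\bigcap_{i=1}^n L_{i,a_i}$ partition $(-1,0]^d$, the class of $\calT(\left\lceil\sigma(\xb)\right\rceil)$ equals $[\calT(\ab)]$ throughout $C_{\ab}$, and by Proposition~\ref{conic_characterization} every class in $\scrC(R)$ occurs among the full-dimensional cells (possibly several times); the lattice points inside the lower-dimensional cells number $O(p^{e(d-1)})$ and are negligible. Hence
$$c_{M,e}=\sum_{\ab\,:\,\calT(\ab)\cong M}\#\!\left(\tfrac{1}{p^e}\sfM\cap C_{\ab}\right)+O(p^{e(d-1)}),$$
the sum over full-dimensional cells of $(-1,0]^d$ with $\calT(\ab)\cong M$. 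Each such $C_{\ab}$ is a bounded (half-open) polytope, so $\#(\tfrac{1}{p^e}\sfM\cap C_{\ab})=\#(\sfM\cap p^{e}C_{\ab})=p^{ed}\Vol(C_{\ab})+O(p^{e(d-1)})$ by the standard equidistribution of lattice points in dilates; dividing by $p^{ed}$ and letting $e\to\infty$ gives $s_{\mathrm{gen}}(M)=\sum_{\ab:\calT(\ab)\cong M}\Vol(C_{\ab})$.

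The step I expect to be the main obstacle is the first one: correctly identifying the $\tfrac{1}{p^e}\sfM/\sfM$-graded components of $R^{1/p^e}$ with the conic ideals $\calT(\left\lceil-\sigma(\gamma)\right\rceil)$ — keeping the signs and ceilings straight — and matching the chosen fundamental domain with the semi-open cube of Proposition~\ref{conic_characterization} so that its cells genuinely index these components. Once that dictionary is in place, the cell decomposition of Proposition~\ref{conic_characterization} together with lattice-point equidistribution make the remainder routine, the $O(p^{e(d-1)})$ errors being harmless because every cell boundary is a finite union of lower-dimensional polytopes.
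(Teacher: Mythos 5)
Your proof is correct, and it is essentially the standard argument behind this result: the paper itself states Theorem~\ref{Thm_Fsig_volume} as a citation to \cite{Bru} (see also \cite{Kor,Sin,WY}) without reproducing a proof, and those references argue exactly as you do, by decomposing $R^{1/p^e}=\kk[\tau^\vee\cap\tfrac{1}{p^e}\sfM]$ into its $\tfrac{1}{p^e}\sfM/\sfM$-graded components $\calT(\lceil-\sigma(\gamma)\rceil)$ and counting the representatives falling in each full-dimensional cell of the fundamental domain via lattice-point equidistribution. Your care with the normalization (choosing a $\ZZ$-basis of $\sfM$ so that $(-1,0]^d$ is a genuine fundamental domain of covolume $1$) is exactly the right reading of the statement, and the only cosmetic slip is that the cells should be understood as $\bigl(\bigcap_{i=1}^n L_{i,a_i}\bigr)\cap(-1,0]^d$, which is clearly what you intend.
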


\bigskip

We recall a formula to calculate the dual $F$-signatures of toric rings given in \cite{ST}.
Let $\Vol(\sfM)$ be the Euclidean volume of an elementary parallelepiped of the lattice $\sfM$.
It is well known that the ideal
$I_R=(t_1^{\alpha_1}\cdots t_d^{\alpha_d} : (\alpha_1,\ldots,\alpha_d)\in (\tau^{\vee})^{\circ}\cap \ZZ^d)$
is isomorphic to $\omega_R$ (\cite{S78}), where $(\tau^{\vee})^{\circ}$ denotes the relative interior of $\tau^{\vee}$.
Let $\calG(R)$ be the set consisting of the points in $(\tau^{\vee})^{\circ}\cap \sfM$ corresponding to the monomials which are the minimal generators of $I_R$.

\begin{prop}[{\cite[Proposition~6.1]{ST}}]\label{prop go toric}
Work with the same notation as above.
Then we have
\begin{align*}
s_{\mathrm{dual}}(R) = \min\set{\frac{1}{\Vol(\sfM)\cdot|T|}
\Vol \left (\bigcup_{a \in T} \tau^\vee \cap \left (a  - \tau^\vee \right) \right) : \emptyset \neq T\subset \calG(R)}.
\end{align*}
\end{prop}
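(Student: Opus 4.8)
The plan is to unwind $s_{\mathrm{dual}}(R)=s(\omega_R)$ into a lattice-point optimisation and then solve that optimisation by a Hall-type matching argument. First I would replace $\omega_R$ by the isomorphic ideal $I_R$ and identify $F^e_*\omega_R$ with $\omega_R^{1/p^e}=\bigoplus_{m\in(\tau^\vee)^\circ\cap\tfrac1{p^e}\sfM}\kk\,t^m$, viewed as an $R$-module through $R\hookrightarrow R^{1/p^e}$; since $\rank_R F^e_*\omega_R=p^{ed}=[\tfrac1{p^e}\sfM:\sfM]$ and $\tfrac1{p^{ed}}\#(P\cap\tfrac1{p^e}\sfM)\to\Vol(P)/\Vol(\sfM)$ for every bounded polytope $P\subseteq\sfM_\RR$, the right-hand side of the asserted formula is exactly a limit of such normalised counts. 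Next, using $\Hom$-duality for the finite extension $R\subseteq R^{1/p^e}$, a homomorphism $F^e_*\omega_R\to\omega_R$ is multiplication by an element $g\in R^{1/p^e}$ followed by the toric trace $\mathrm{Tr}\colon\omega_{R^{1/p^e}}\to\omega_R$, which sends a $\tfrac1{p^e}\sfM$-graded monomial to itself if its degree lies in $\sfM$ and to $0$ otherwise. Hence a homomorphism $F^e_*\omega_R\to\omega_R^{\oplus N}$ is a tuple $(g_1,\dots,g_N)\in(R^{1/p^e})^{\oplus N}$, and by Nakayama it is surjective precisely when the \emph{generator matrix} --- with rows indexed by pairs $(k,a)$, $k\in[N]$, $a\in\calG(R)$, columns by the minimal generators $\beta$ of $\omega_R^{1/p^e}$, and $((k,a),\beta)$-entry equal to the coefficient of $t^{a-\beta}$ in $g_k$ --- has rank $N\,|\calG(R)|$.

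The combinatorial key is that this entry vanishes unless $\beta\in(\tau^\vee)^\circ$ and $a-\beta\in\tau^\vee$, i.e.\ $\beta\in(\tau^\vee)^\circ\cap(a-\tau^\vee)$, and that for $a\in\calG(R)$ every $\beta\in(\tau^\vee)^\circ\cap(a-\tau^\vee)\cap\tfrac1{p^e}\sfM$ is automatically a minimal generator of $\omega_R^{1/p^e}$: if $\beta-m\in(\tau^\vee)^\circ$ for some $0\neq m\in\tau^\vee\cap\sfM$ then $\sigma_i(a-m)\ge\sigma_i(a)-\sigma_i(\beta)>0$ for all $i$, so $t^a=t^m\cdot t^{a-m}\in\fkm\,\omega_R$, contradicting minimality of $a$. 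The upper bound follows immediately: for nonempty $T\subseteq\calG(R)$ the $N|T|$ rows $(k,a)$ with $a\in T$ have nonzero entries only in columns lying in $\bigcup_{a\in T}\tau^\vee\cap(a-\tau^\vee)\cap\tfrac1{p^e}\sfM$, so their linear independence forces $N|T|\le\#\bigl(\bigcup_{a\in T}\tau^\vee\cap(a-\tau^\vee)\cap\tfrac1{p^e}\sfM\bigr)$; dividing by $p^{ed}$, letting $e\to\infty$, and minimising over $T$ gives ``$\le$'' in the formula.

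For ``$\ge$'' I would take $N$ to be the largest integer with $N\le\min_{\emptyset\neq T\subseteq\calG(R)}\tfrac1{|T|}\#\bigl(\bigcup_{a\in T}(\tau^\vee)^\circ\cap(a-\tau^\vee)\cap\tfrac1{p^e}\sfM\bigr)$ and apply the deficiency form of Hall's marriage theorem to the bipartite graph joining each pair $(k,a)$ to the admissible columns $(\tau^\vee)^\circ\cap(a-\tau^\vee)\cap\tfrac1{p^e}\sfM$: the neighbourhood of a set $W$ of pairs depends only on $\{a:(k,a)\in W\}\subseteq\calG(R)$, so Hall's condition reduces to the displayed inequality and produces an injective assignment $(k,a)\mapsto\beta_{k,a}$ of the pairs to distinct admissible columns. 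It then remains to choose the $g_k$ --- with support inside $\{a-\beta_{k,a}:a\in\calG(R)\}$ and otherwise generic coefficients $\lambda_{k,c}$ --- so that the square submatrix of the generator matrix on the columns $\{\beta_{k,a}\}$ has nonzero determinant; since this determinant is a polynomial in the $\lambda_{k,c}$ with ``diagonal'' monomial $\prod_{k,a}\lambda_{k,\,a-\beta_{k,a}}$, it suffices to choose the Hall assignment (e.g.\ greedily for a fixed total order on $\tfrac1{p^e}\sfM$) so that this monomial is not cancelled by any other term, after which a generic choice of the $\lambda_{k,c}$ works. Dividing by $p^{ed}$ and letting $e\to\infty$ --- using $\tfrac1{p^{ed}}\#(P\cap\tfrac1{p^e}\sfM)\to\Vol(P)/\Vol(\sfM)$, that replacing $(\tau^\vee)^\circ$ by $\tau^\vee$ changes these counts by only $O(p^{e(d-1)})$, and that the minimum runs over the fixed finite family of subsets of $\calG(R)$ and so commutes with the limit --- yields ``$\ge$'' and simultaneously shows the $\limsup$ in the definition is a genuine limit.

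I expect the main obstacle to be this last step: converting the Hall assignment into an honest surjection. The difficulty is the ``convolution'' constraint that all entries of the generator matrix in a fixed block $k$ are coefficients of the single element $g_k$, so one cannot prescribe the matrix freely; one must argue that a sufficiently structured choice of the $\beta_{k,a}$ keeps the relevant determinant from being identically zero. The upper bound and the reduction to lattice counting are, by contrast, comparatively routine.
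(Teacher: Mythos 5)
First, a point of reference: the paper does not prove this proposition at all --- it is quoted verbatim from [ST, Proposition~6.1], and the argument there rests on the identification of $s(\omega_R)$ with the relative $F$-rational signature ([ST, Corollary~5.6 and Theorem~5.10]) and a Hilbert--Kunz-type colength computation over monomial submodules of $\omega_R$, not on a direct construction of surjections $F^e_*\omega_R\twoheadrightarrow\omega_R^{\oplus N}$. So your proposal is necessarily a different route. The first half of it is fine: the identification of $\Hom_R(F^e_*\omega_R,\omega_R)$ with $F^e_*R$ via the toric trace, the Nakayama reduction to a full-row-rank condition on the generator matrix, the observation that every $\beta\in(\tau^\vee)^\circ\cap(a-\tau^\vee)\cap\tfrac1{p^e}\sfM$ with $a\in\calG(R)$ is a minimal generator of $\omega_R^{1/p^e}$ (your inequality chain should read $\sigma_i(a-m)>\sigma_i(a)-\sigma_i(\beta)\ge 0$, but the conclusion stands), the row-support count, and the passage from normalized lattice-point counts to volumes all check out. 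This gives ``$\le$'' correctly.

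The gap is exactly where you flag it, and it is not a technicality you can wave at: Hall's theorem produces a system of distinct representatives $\beta_{k,a}$, but because the entry in position $((k,a),\beta)$ is forced to equal the entry in position $((k,a'),\beta')$ whenever $a-\beta=a'-\beta'$, a nonzero ``diagonal'' does not imply the determinant is a nonzero polynomial in the $\lambda_{k,c}$. The coefficient of the monomial $\prod_{k,a}\lambda_{k,\,a-\beta_{k,a}}$ is a \emph{signed} count of all bijections $\sigma$ whose per-block difference multisets $\set{a-\sigma(k,a)}$ agree with $\set{a-\beta_{k,a}}$, and such signed counts can vanish --- this is the permanent-versus-determinant phenomenon for matrices with repeated entries. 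Neither of your proposed remedies closes this. The standard leading-term/weighting trick (replace $\lambda_{k,c}$ by $t^{\ell(c)}$ for generic $\ell$) fails here because every bijection onto the fixed column set $B$ has the same total weight $\sum_{k,a}\ell(a)-\sum_{\beta\in B}\ell(\beta)$, so no monomial is singled out by degree; and ``choose the Hall assignment greedily so that the diagonal monomial is not cancelled'' is an assertion, not an argument --- you would need to prove that some system of distinct representatives exists for which, in each block $k$, the bipartite graph pairing the generators $a$ with the differences $a-\beta_{k,a}$ admits a unique perfect matching (or at least an odd signed count), and nothing in the greedy construction guarantees this. Until that combinatorial lemma is supplied, your map's surjectivity is unverified and ``$\ge$'' is not established. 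The clean way out is to abandon the explicit surjections and follow [ST]: pass to the relative $F$-rational signature and minimize a normalized Hilbert--Kunz-type colength over the monomial submodules of $\omega_R$ generated by subsets $T\subseteq\calG(R)$, which is where the sets $\bigcup_{a\in T}\tau^\vee\cap(a-\tau^\vee)$ actually come from.
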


\bigskip

\subsection{$O$-sequences}\label{sec_Oseq}
In this subsection, we introduce the notion of $O$-sequences and give a lemma, which is required to get our results.

We define the following partial order on $\ZZ^{n}_{>0}$; for $a,b \in \ZZ^{n}_{>0}$, we write $a\preceq b$ if $b-a\in \ZZ^{n}_{\ge 0}$.
For a non-empty subset $T\subset \ZZ^{n}_{>0}$, let $I(T):=\bigcup_{a\in T}\set{b\in \ZZ^{n}_{>0} : b\preceq a}$.
We denote by $I(a)$ instead of $I(\{a\})$ for $a\in \ZZ^{n}_{>0}$.
In addition, for $l\in \ZZ_{>0}$, we set
$$H_l(T):=\set{(x_1,\ldots,x_n)\in I(T) : x_1+\cdots+x_{n}=l}$$
and $h_l(T):=|H_{l+n}(T)|$.
Then a sequence $(h_0,h_1,\ldots,h_s)$ is said to be an \textit{$O$-sequence} if there exists $T\subset \ZZ^n$ for some $n\in \ZZ_{>0}$ such that $h_i=h_i(T)$ for each $i=0,1,\ldots,s$.

Given positive integers $f$ and $i$, there exists a unique representation, called the \textit{$i$-binomial representation} of $f$:
$$f=\binom{n_i}{i}+\binom{n_{i-1}}{i-1}+\cdots+\binom{n_{i-j}}{i-j},$$
where $n_i>n_{i-1}>\cdots>n_{i-j}\ge i-j\ge1$.
Then we define
$$f^{\langle i \rangle}:=\binom{n_i+1}{i+1}+\binom{n_{i-1}+1}{i}+\cdots+\binom{n_{i-j}+1}{i-j+1}$$
and
$$f^{(i)}:=\binom{n_i}{i+1}+\binom{n_{i-1}}{i}+\cdots+\binom{n_{i-j}}{i-j+1}.$$

The following characterization of $O$-sequences is known:
\begin{thm}[{\cite{Mac}}]\label{thm_mac}
A sequence $(h_0,h_1,\ldots,h_s)\in \ZZ^{s+1}$ is an $O$-sequence if and only if (i) $h_0=1$ and (ii) for each $i=0,\ldots,s-1$, one has $0\le h_{i+1}\le h_i^{\langle i \rangle}$.
\end{thm}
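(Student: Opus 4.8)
The plan is to recast the statement in terms of finite order ideals of monomials and then reduce it to one extremal combinatorial estimate. Given a nonempty $T\subset\ZZ^{n}_{>0}$, subtract $(1,\dots,1)$ from every point of $I(T)$: a point of coordinate-sum $l+n$ becomes the exponent vector of a degree-$l$ monomial in $n$ variables, and $I(T)$ turns into a nonempty finite \emph{order ideal} $\Gamma$ of monomials, i.e.\ a set of monomials closed under taking divisors, with $h_l(T)=|\Gamma_l|$ where $\Gamma_l$ is the set of degree-$l$ monomials in $\Gamma$. Since $\Gamma\neq\emptyset$ it contains the constant monomial, so $h_0=1$ is automatic and only condition (ii) has content. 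For a set $S$ of degree-$i$ monomials, let $S^{\uparrow}$ be the set of degree-$(i+1)$ monomials all of whose degree-$i$ divisors lie in $S$; because $\Gamma$ is an order ideal one always has $\Gamma_{i+1}\subseteq(\Gamma_i)^{\uparrow}$, so the theorem comes down to computing $\max\{\,|S^{\uparrow}| : |S|=f\,\}$ and identifying where it is attained.

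For $f\ge 0$ let $S_f^{(i)}$ denote the reverse-lexicographic initial segment, i.e.\ the $f$ largest degree-$i$ monomials in the reverse-lexicographic order. The key lemma is: (a) $\bigl(S_f^{(i)}\bigr)^{\uparrow}=S_{f^{\langle i\rangle}}^{(i+1)}$, so in particular $\bigl|\bigl(S_f^{(i)}\bigr)^{\uparrow}\bigr|=f^{\langle i\rangle}$; and (b) $|T^{\uparrow}|\le f^{\langle i\rangle}$ for every $f$-element set $T$ of degree-$i$ monomials. Part (a) is a direct computation: the segments $S_f^{(i)}$ are nested in $f$, the greedy $i$-binomial expansion $f=\binom{n_i}{i}+\binom{n_{i-1}}{i-1}+\cdots$ corresponds to a block decomposition of $S_f^{(i)}$ (the first block being all degree-$i$ monomials in the first $n_i-i+1$ variables, the later blocks suitably shifted lower-degree segments), and passing from degree $i$ to degree $i+1$ replaces each summand $\binom{n_j}{j}$ by $\binom{n_j+1}{j+1}$; one formalizes this by induction on the number of summands. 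Part (b) is the Kruskal--Katona-type extremality inequality, which I would prove by the classical \emph{compression} argument: repeatedly symmetrize $T$ in a pair of variables $x_a,x_b$ so as to favor the lower index, check that one such compression never increases $|T|$ and never decreases $|T^{\uparrow}|$, and argue that the only fully compressed sets are the initial segments $S_f^{(i)}$; combined with (a) this yields the bound.

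Granting the lemma, necessity is immediate: if $(h_0,\dots,h_s)$ is realized by an order ideal $\Gamma$, then $h_0=1$ and $0\le h_{i+1}=|\Gamma_{i+1}|\le\bigl|(\Gamma_i)^{\uparrow}\bigr|\le|\Gamma_i|^{\langle i\rangle}=h_i^{\langle i\rangle}$. For sufficiency, assume $h_0=1$ and $0\le h_{i+1}\le h_i^{\langle i\rangle}$ for all $i$, and build $\Gamma$ degree by degree by putting $\Gamma_i:=S_{h_i}^{(i)}$ in a fixed large enough variable set (monotonicity of $f\mapsto f^{\langle i\rangle}$ shows $h_1$ variables already suffice). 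Since $h_i\le h_{i-1}^{\langle i-1\rangle}$ and the $S_f^{(i)}$ are nested, part (a) gives $\Gamma_i=S_{h_i}^{(i)}\subseteq S_{h_{i-1}^{\langle i-1\rangle}}^{(i)}=(\Gamma_{i-1})^{\uparrow}$, which says precisely that every degree-$(i-1)$ divisor of a monomial of $\Gamma_i$ lies in $\Gamma_{i-1}$; hence $\Gamma$ is an order ideal, and $|\Gamma_i|=h_i$ by construction. Translating the maximal monomials of $\Gamma$ back by $+(1,\dots,1)$ gives a set $T\subset\ZZ^{n}_{>0}$ with $h_i(T)=h_i$, so $(h_0,\dots,h_s)$ is an O-sequence.

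The main obstacle is part (b) of the key lemma — that the reverse-lexicographic initial segments maximize $|S^{\uparrow}|$ among $f$-element sets of monomials of a fixed degree; this is the genuine content of Macaulay's theorem. The delicate points are setting up the correct compression operator in the (non-squarefree) monomial setting, verifying monotonicity of $|S^{\uparrow}|$ under a single compression step, and checking that the compression process terminates exactly at $S_f^{(i)}$; Green's hyperplane-restriction theorem provides an alternative inductive route to the same inequality. By comparison, part (a) and the remaining bookkeeping are routine.
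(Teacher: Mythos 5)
The paper offers no proof of this statement: it is quoted as a classical theorem of Macaulay \cite{Mac} (see also \cite[Section~4.2]{BH} and \cite{S78}, which the paper cites for the surrounding facts), so there is no in-paper argument to compare yours against. What you propose is the standard modern proof in the order-ideal-of-monomials formulation: after the shift by $(1,\ldots,1)$ the set $I(T)$ becomes a finite order ideal $\Gamma$ of monomials with $h_l(T)=|\Gamma_l|$, the condition $h_0=1$ is automatic, necessity follows from $\Gamma_{i+1}\subseteq(\Gamma_i)^{\uparrow}$ together with the extremal bound $|S^{\uparrow}|\le f^{\langle i\rangle}$ for $|S|=f$, and sufficiency follows by assembling $\Gamma$ degree by degree from nested initial segments $S_{h_i}^{(i)}$ using $\bigl(S_f^{(i)}\bigr)^{\uparrow}=S_{f^{\langle i\rangle}}^{(i+1)}$ and the monotonicity of $f\mapsto f^{\langle i\rangle}$. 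That architecture is correct, and the reduction, the necessity direction, and the sufficiency construction are carried out accurately.

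The caveat is that essentially all of the content of the theorem is concentrated in part (b) of your key lemma --- the extremality of initial segments among $f$-element sets of degree-$i$ monomials --- which you name and attribute to a compression argument but do not execute. Verifying that a single two-variable compression preserves cardinality and does not decrease $|S^{\uparrow}|$, and that the fully compressed sets are exactly the initial segments, is the delicate core of Macaulay's theorem (it is why textbook treatments such as \cite[Section~4.2]{BH} run to several pages, there via lex ideals and generic linear forms rather than upper shadows; Green's hyperplane-restriction theorem, which you mention, is another complete route). As an account of how one would prove this cited classical result, your outline is sound and correctly locates the difficulty; as a self-contained proof it is incomplete exactly at that point.
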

For positive integers $a$, $b$ and $i$, we have $a^{\langle i \rangle}\le b^{\langle i \rangle}$ if $a\le b$ (\cite[Lemma~4.2.13 (a)]{BH}).
Therefore, for an $O$-sequence $(h_0,h_1,\ldots,h_s)$, we can see that $h_i\le \binom{h_i+i-1}{i}$ for each $i=0,1,\ldots,s$ since the inequality $h_{i+1}\le h_i^{\langle i \rangle}$ holds.
For further information on the $O$-sequences, see, e.g., \cite[Section~2]{S78} and \cite[Chapter~4.2]{BH}.

\bigskip

The following is a key lemma to show our theorems:
\begin{lemma}\label{key_lemma}
    Let $(h_0,\ldots,h_s)$ be an $O$-sequence with $h_1\le n+1$.
    Then for any $i=0,\ldots,s$, we have 
    \begin{align}\label{lem_ineq}
    \frac{h_i}{h_s} \ge \frac{\binom{n+i}{i}}{\binom{n+s}{s}}.
    \end{align}
\end{lemma}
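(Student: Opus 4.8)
The key inequality $h_i/h_s \ge \binom{n+i}{i}/\binom{n+s}{s}$ is equivalent, after cross-multiplying, to
$h_i \binom{n+s}{s} \ge h_s \binom{n+i}{i}$. Since $\binom{n+i}{i}$ is the Hilbert function value at degree $i$ of the polynomial ring in $n+1$ variables, the claim says that, among all $O$-sequences with $h_1 \le n+1$, the maximal growth of the ratio $h_s/h_i$ is realized by the polynomial ring. The plan is to prove this by induction, exploiting Macaulay's bound from Theorem~\ref{thm_mac}, specifically the inequality $h_{j+1} \le h_j^{\langle j\rangle}$.

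First I would reduce to a one-step statement: it suffices to show that for each $j$ with $i \le j < s$ one has $h_{j+1}/h_j \le \binom{n+j+1}{j+1}/\binom{n+j}{j}$, since telescoping the product of these ratios from $j=i$ to $j=s-1$ gives $h_s/h_i \le \binom{n+s}{s}/\binom{n+i}{i}$, which rearranges to (\ref{lem_ineq}). So the crux is the single-step bound. Since $h_{j+1} \le h_j^{\langle j\rangle}$, it is enough to prove the purely numerical inequality
\[
\frac{h_j^{\langle j\rangle}}{h_j} \le \frac{\binom{n+j+1}{j+1}}{\binom{n+j}{j}} = \frac{n+j+1}{j+1},
\]
valid whenever $h_j \le \binom{n+j}{j}$ (which holds for all $j$ by the remark preceding the lemma, using $h_1 \le n+1$). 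In other words: among integers $f$ with $1 \le f \le \binom{n+j}{j}$, the ratio $f^{\langle j\rangle}/f$ is maximized at $f = \binom{n+j}{j}$, where it equals $\binom{n+j+1}{j+1}/\binom{n+j}{j}$.

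The hard part is this extremal property of the $j$-binomial representation, and I would establish it by a careful analysis of how $f \mapsto f^{\langle j\rangle}$ behaves. Write the $j$-binomial representation $f = \binom{n_j}{j} + \binom{n_{j-1}}{j-1} + \cdots + \binom{n_{j-k}}{j-k}$ with $n_j > n_{j-1} > \cdots > n_{j-k} \ge j-k \ge 1$. One approach: show $f^{\langle j\rangle}$ is ``concave-like'' relative to $f$ in the sense that $f^{\langle j\rangle} \cdot g \le g^{\langle j\rangle} \cdot f$ whenever $f \le g$ and both lie below $\binom{n+j}{j}$; this is a known type of comparison (it refines \cite[Lemma~4.2.13]{BH}, which only gives monotonicity $f^{\langle j\rangle} \le g^{\langle j\rangle}$). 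Concretely, for the top value $f = \binom{n+j}{j}$ we have $f^{\langle j\rangle} = \binom{n+j+1}{j+1}$ by the Pascal identity, so the target ratio is exactly $\frac{n+j+1}{j+1}$; for smaller $f$ one checks that incrementing $f$ by $1$ multiplies $f^{\langle j\rangle}$ by at most $\frac{n+j+1}{j+1}$ — equivalently, that the ``discrete derivative'' $(f+1)^{\langle j\rangle} - f^{\langle j\rangle}$, which equals $\binom{m}{j}$ for the appropriate $m$ determined by the representation of $f+1$, stays in the right range. I expect the bookkeeping with the binomial representation to be the only real obstacle; once the single-step bound $f^{\langle j\rangle}/f \le (n+j+1)/(j+1)$ is in hand, the telescoping argument and the base case $i = s$ (where (\ref{lem_ineq}) is an equality) finish the proof immediately. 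An alternative, possibly cleaner, route is to induct on $s - i$ directly: the case $s = i$ is trivial, and for the inductive step apply the hypothesis to the shifted $O$-sequence together with $h_s \le h_{s-1}^{\langle s-1\rangle}$ and the bound on $h_{s-1}^{\langle s-1\rangle}/h_{s-1}$ just discussed.
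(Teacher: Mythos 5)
Your reduction is exactly the one the paper uses: telescope (\ref{lem_ineq}) down to the single-step bound $h_{j+1}/h_j \le \binom{n+j+1}{j+1}/\binom{n+j}{j}$ and, via Macaulay's inequality $h_{j+1}\le h_j^{\langle j\rangle}$ together with $h_j\le\binom{n+j}{j}$, reduce everything to the purely numerical claim that $f^{\langle j\rangle}/f\le (n+j+1)/(j+1)$ whenever $1\le f\le\binom{n+j}{j}$. That claim is correct, and it is precisely what the paper proves (in the equivalent form $nh_{s-1}\ge s\,h_{s-1}^{(s-1)}$) by an explicit two-part estimate carried out on the binomial representation of $h_{s-1}$. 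So up to this point you and the paper agree.

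The gap is that both strategies you offer for this last, crucial step rest on false statements. The ``concave-like'' comparison $f^{\langle j\rangle}\cdot g\le g^{\langle j\rangle}\cdot f$ for $f\le g$, i.e.\ monotonicity of the ratio $f^{\langle j\rangle}/f$, fails: for $j=2$ one has $3^{\langle 2\rangle}=\binom{4}{3}=4$ and $4^{\langle 2\rangle}=\binom{4}{3}+\binom{2}{2}=5$, so the ratio drops from $4/3$ to $5/4$; taking $n=2$, both $3$ and $4$ lie below $\binom{4}{2}=6$, so this is a counterexample in the relevant range. The alternative claim that each unit increment multiplies $f^{\langle j\rangle}$ by at most $(n+j+1)/(j+1)$ also fails (from $f=2$ to $f=3$ with $j=2$, the value $f^{\langle 2\rangle}$ jumps from $2$ to $4$, while $(n+3)/3=5/3$ when $n=2$), and even if it were true it would not chain: combining $f^{\langle j\rangle}\le Cf$ with $(f+1)^{\langle j\rangle}\le C f^{\langle j\rangle}$ yields $C^2f$, not the needed $C(f+1)$. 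The function $f\mapsto f^{\langle j\rangle}/f$ genuinely oscillates and only attains its maximum over $\{1,\ldots,\binom{n+j}{j}\}$ at the right endpoint; proving this requires a global argument with the full binomial representation of $f$, as in the paper, and that bookkeeping is the actual content of the lemma rather than a routine verification.
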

\begin{proof}
Note that the inequality (\ref{lem_ineq}) holds if $i=s$.
If the inequality $ h_i/h_{i+1} \ge \binom{n+i}{i}/\binom{n+i+1}{i+1}$ holds for each $0\le i \le s-1$, then we can see that
$$\frac{h_i}{h_s}=\frac{h_i}{h_{i+1}}\frac{h_{i+1}}{h_{i+2}}\cdots\frac{h_{s-1}}{h_s}\ge \frac{\binom{n+i}{i}}{\binom{n+i+1}{i+1}}\frac{\binom{n+i+1}{i+1}}{\binom{n+i+2}{i+2}}\cdots\frac{\binom{n+s-1}{s-1}}{\binom{n+s}{s}}=\frac{\binom{n+i}{i}}{\binom{n+s}{s}}.$$
Thus, it is enough to show that $h_{s-1}/h_s \ge \binom{n+s-1}{s-1}/\binom{n+s}{s}$, which is equivalent to $(n+s)h_{s-1}\ge sh_s$.

Since $h_s\le h_{s-1}^{\langle s-1 \rangle}$ from Theorem~\ref{thm_mac}, 
if the inequality $(n+s)h_{s-1}\ge sh_{s-1}^{\langle s-1 \rangle}$ (this is equivalent to $nh_{s-1}\ge sh_{s-1}^{(s-1)}$) holds, then we get the desired result; $(n+s)h_{s-1}\ge sh_{s-1}^{\langle s-1 \rangle}\ge sh_s$.
Therefore, it suffices to prove that $nh_{s-1}\ge sh_{s-1}^{(s-1)}$.

Note that $h_{s-1}\le \binom{h_1+s-2}{s-1}\le \binom{n+s-1}{s-1}$.
The equality $nh_{s-1}=sh_{s-1}^{(s-1)}$ holds trivially if $h_{s-1}=\binom{n+s-1}{s-1}$, so we may assume that $h_{s-1}<\binom{n+s-1}{s-1}$ and $h_{s-1}$ has the following binomial representation:
$$h_{s-1}=\binom{n_{s-1}}{s-1}+\binom{n_{s-2}}{s-2}+\cdots +\binom{n_{s-j}}{s-j}.$$
Note that $n_{s-1}<n+s-1$.
We can see that for each $p=1,\ldots,j$,
\begin{align*}
s\binom{n_{s-p}}{s-p+1}&=((s-p+1)+(p-1))\binom{n_{s-p}}{s-p+1} \\
&=(n_{s-p}-s+p)\binom{n_{s-p}}{s-p}+(p-1)\binom{n_{s-p}}{s-p+1},
\end{align*}
and hence 
\begin{align*}
nh_{s-1}-sh_{s-1}^{(s-1)}&=n\sum_{p=1}^j\binom{n_{s-p}}{s-p}-s\sum_{p=1}^j\binom{n_{s-p}}{s-p+1} \\
&=\sum_{p=1}^j (n+s-p-n_{s-p})\binom{n_{s-p}}{s-p}-\sum_{p=1}^j(p-1)\binom{n_{s-p}}{s-p+1} \\
&=\underbrace{\sum_{p=1}^j (n+s-p-1-n_{s-p})\binom{n_{s-p}}{s-p}}_{(a)} \\
& \quad \quad \quad \quad \quad \quad \quad \quad \quad+\underbrace{\sum_{p=1}^j\rbra{\binom{n_{s-p}}{s-p}-(p-1)\binom{n_{s-p}}{s-p+1}}}_{(b)}.
\end{align*}
We show that $(a)\ge 0$ and $(b)\ge 0$.
It follows from $n+s-1>n_{s-1}>n_{s-2}>\cdots>n_{s-j}$ that $n+s-p-1-n_{s-p}\ge 0$ for each $p$.
Thus, we have $(a)\ge 0$.

We can see that 
\begin{align*}
\binom{n_{s-p}}{s-p}&=\binom{n_{s-p}-1}{s-p}+\binom{n_{s-p}-1}{s-p-1}=\binom{n_{s-p}-1}{s-p}+\rbra{\binom{n_{s-p}-2}{s-p-1}+\binom{n_{s-p}-2}{s-p-2}} \\
&=\cdots=\rbra{\sum_{i=1}^{j-p}\binom{n_{s-p}-i}{s-p-i+1}}+\binom{n_{s-p}-(j-p)}{s-j}
\end{align*}
and 
$$\sum_{p=1}^{j-1}\binom{n_{s-p}}{s-p}=\sum_{p=1}^{j-1}\rbra{\sum_{q=1}^p\binom{n_{s-q}+q-p-1}{s-p}}+\sum_{p=1}^{j-1}\binom{n_{s-p}-(j-p)}{s-j}.$$
Therefore, we have
\begin{align*}
(b)&=\sum_{p=1}^{j-1}\rbra{\binom{n_{s-p}}{s-p}-p\binom{n_{s-p-1}}{s-p}}+\binom{n_{s-j}}{s-j} \\
&=\sum_{p=1}^{j-1}\rbra{\sum_{q=1}^p\rbra{\binom{n_{s-q}+q-p-1}{s-p}-\binom{n_{s-p-1}}{s-p}}}+\sum_{p=1}^{j}\binom{n_{s-p}-(j-p)}{s-j}.
\end{align*}
By $n_{s-q}+q-p-1\ge n_{s-p-1}$, the inequality $(b)\ge 0$ holds and we conclude that $nh_{s-1}\ge sh_{s-1}^{(s-1)}$.
\end{proof}

\bigskip

\section{Dual $F$-signatures of Veronese subrings of polynomial rings}\label{sec_vero}
This section is devoted to calculating the dual $F$-signatures of Veronese subrings of polynomial rings.

Let $V_{n,d}$ be the $n$th Veronese subring of the polynomial ring $S=\kk[x_1,\ldots,x_d]$, i.e., 
$$V_{n,d}:=\bigoplus_{i\ge 0}S_{in}.$$
We can realize $V_{n,d}$ as a toric ring; let $\sfM:=\set{(x_1,\ldots,x_d)\in \ZZ^d : x_1+\cdots+x_d \equiv 0 \; \mod n}$ and 
let $\tau^{\vee}=\set{(x_1,\ldots,x_d)\in \RR^d : x_i\ge 0 \text{ for all }i\in [d]}$.
Then we have $V_{n,d}=\kk[\tau^{\vee}\cap \sfM]$ and 
$$\calG(V_{n,d})=\set{(a_1,\ldots,a_d)\in\ZZ_{>0}^d : a_1+\cdots+a_d=n\left \lceil \frac{d}{n} \right \rceil}.$$

\bigskip

\begin{thm}\label{dual_ver}
We have 
$s_{\mathrm{dual}}(V_{n,d})=\frac{1}{d}\left\lceil\frac{d}{n} \right\rceil$.
\end{thm}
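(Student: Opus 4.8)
The plan is to evaluate the minimum in Smirnov--Tucker's formula (Proposition~\ref{prop go toric}) term by term, rewriting each volume as the cardinality of an order ideal and then invoking Lemma~\ref{key_lemma}.

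Two geometric reductions come first. Since $\sfM$ has index $n$ in $\ZZ^d$, we have $\Vol(\sfM)=n$. For $\ab=(a_1,\dots,a_d)\in\calG(V_{n,d})$ the region $\tau^{\vee}\cap(\ab-\tau^{\vee})$ is the box $\prod_{i=1}^d[0,a_i]$, and more generally, for a nonempty $T\subseteq\calG(V_{n,d})$, a point $\xb\in\RR_{>0}^d$ lies in $\bigcup_{\ab\in T}\tau^{\vee}\cap(\ab-\tau^{\vee})$ exactly when its ceiling vector $(\lceil x_1\rceil,\dots,\lceil x_d\rceil)$ lies in the order ideal $I(T)\subseteq\ZZ_{>0}^d$ of Section~\ref{sec_Oseq}. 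Slicing the union along ceiling vectors (each fibre is a half-open unit cube) gives
$$\Vol\rbra{\bigcup_{\ab\in T}\tau^{\vee}\cap(\ab-\tau^{\vee})}=|I(T)|.$$
Setting $m:=n\lceil d/n\rceil$, Proposition~\ref{prop go toric} thus reduces the theorem to proving $|I(T)|\ge\frac{m}{d}|T|$ for every nonempty $T\subseteq\calG(V_{n,d})$, with equality for at least one $T$; note $\frac{1}{n}\cdot\frac{m}{d}=\frac{1}{d}\lceil d/n\rceil$.

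The equality case is easy: take $T=\calG(V_{n,d})$, so that $I(T)$ is the whole simplex $\{\bb\in\ZZ_{>0}^d:b_1+\dots+b_d\le m\}$ of size $\binom{m}{d}$, while $|T|=\binom{m-1}{d-1}$; then $\binom{m}{d}=\frac{m}{d}\binom{m-1}{d-1}$. For the inequality $|I(T)|\ge\frac{m}{d}|T|$ --- the crux --- I would fix a nonempty $T\subseteq\calG(V_{n,d})$ and set $h_j:=|\{\bb\in I(T):b_1+\dots+b_d=d+j\}|$ for $j=0,\dots,s$, where $s:=m-d$. By construction $(h_0,\dots,h_s)$ is an $O$-sequence (in $d$ variables), with $h_s=|T|$, and $h_1\le d$ because $\ZZ_{>0}^d$ has only $d$ lattice points of coordinate sum $d+1$. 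Applying Lemma~\ref{key_lemma} with its parameter ``$n$'' taken to be $d-1$ --- legitimate since $h_1\le(d-1)+1$ --- yields $h_i/h_s\ge\binom{d-1+i}{i}/\binom{d-1+s}{s}$ for all $i$. Summing over $i=0,\dots,s$ and using the hockey-stick identity $\sum_{i=0}^s\binom{d-1+i}{i}=\binom{d+s}{d}$ gives
$$|I(T)|=\sum_{i=0}^{s}h_i\ \ge\ |T|\cdot\frac{\binom{d+s}{d}}{\binom{d-1+s}{s}}\ =\ |T|\cdot\frac{d+s}{d}\ =\ \frac{m}{d}|T|,$$
since $d+s=m$. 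Combining this lower bound over all $T$ with the equality case gives $s_{\mathrm{dual}}(V_{n,d})=\frac{1}{n}\cdot\frac{m}{d}=\frac{1}{d}\lceil d/n\rceil$.

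The main obstacle is the bookkeeping in the first step rather than any individual computation: one must recognize that $\Vol\rbra{\bigcup_{\ab\in T}\tau^{\vee}\cap(\ab-\tau^{\vee})}$ is literally the size of an order ideal $I(T)\subseteq\ZZ_{>0}^d$, and --- crucially --- that because $\calG(V_{n,d})$ is an antichain sitting at coordinate sum $m$, the top slice of $I(T)$ is exactly $T$, so that $h_s=|T|$ and Lemma~\ref{key_lemma} applies verbatim. Everything after that is the hockey-stick identity together with the $O$-sequence bound already established.
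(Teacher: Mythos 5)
Your proposal is correct and follows essentially the same route as the paper: identify the volume in the Smirnov--Tucker formula with $|I(T)|$ via the unit-cube decomposition, note $h_s(T)=|T|$ and $h_1(T)\le d$, and apply Lemma~\ref{key_lemma} (with its parameter equal to $d-1$) together with the hockey-stick identity, with equality at $T=\calG(V_{n,d})$. The only cosmetic difference is that you verify the equality case by counting the full simplex directly rather than by noting that each $h_l(\calG(V_{n,d}))=\binom{d+l-1}{l}$ makes the lemma's inequality an equality termwise.
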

\begin{proof}
We compute $s_{\mathrm{dual}}(V_{n,d})$ using Proposition~\ref{prop go toric}.
Take a subset $\emptyset \neq T\subset \calG(V_{n,d})$.
Here, we observe the following (compare with \cite[Example~6.2]{ST});
the shape of $\bigcup_{a \in T} \tau^\vee \cap \left (a  - \tau^\vee \right)$ looks like a ``building block pyramid".
More precisely, it can be divided into integral unit cubes, and each cube is identified with its vertex with the largest coordinates, that is, these cubes one-to-one correspond to the elements of $I(T)$.
Since the volume of the unit cube is $1$, we have 
\begin{align*}
\Vol \left (\bigcup_{a \in T} \tau^\vee \cap \left (a  - \tau^\vee \right) \right)=|I(T)|=\sum_{l=d}^{n\left\lceil\frac{d}{n} \right\rceil}|H_l(T)|=h_0(T)+\cdots+h_s(T),
\end{align*}
where $s=n\left\lceil\frac{d}{n} \right\rceil-d$. Note that $\Vol(\sfM)=n$ and $|T|=h_s(T)$.
Thus, it follows from Lemma~\ref{key_lemma} that
\begin{align}\label{ineq_vero}
\frac{1}{\Vol(\sfM)\cdot |T|}
\Vol \left (\bigcup_{a \in T} \tau^\vee \cap \left (a  - \tau^\vee \right) \right)&=\frac{1}{n}\sum_{l=0}^{s}\frac{h_l(T)}{h_s(T)} \notag \\
&\ge \frac{1}{n}\sum_{l=0}^s\frac{\binom{d+l-1}{l}}{\binom{d+s-1}{s}}=\frac{1}{n}\frac{\binom{d+s}{s}}{\binom{d+s-1}{s}}=\frac{d+s}{nd}=\frac{1}{d}\left\lceil\frac{d}{n} \right\rceil.
\end{align}
Moreover, if $T=\calG(V_{n,d})$, then we have $h_l(T)=\binom{d+l-1}{l}$ for $l=0,\ldots,s$, and hence the equality of (\ref{ineq_vero}) holds.
Therefore, it follows from Proposition~\ref{prop go toric} that
$$s_{\mathrm{dual}}(V_{n,d}) = \min\set{\frac{1}{\Vol(\sfM)\cdot|T|}
\Vol \left (\bigcup_{a \in T} \tau^\vee \cap \left (a  - \tau^\vee \right) \right) : \emptyset \neq T\subset \calG(V_{n,d})}=\frac{1}{d}\left\lceil\frac{d}{n} \right\rceil.$$
\end{proof}

\bigskip

\section{Dual $F$-signatures of Segre products of polynomial rings}\label{sec_segre}
In this section, we discuss the dual $F$-signatures of Segre products of polynomial rings.

\subsection{Segre products of polynomial rings and their generalized $F$-signatures}
To compute the dual $F$-signature of the Segre product of polynomial rings, we need some basic properties about it and its generalized $F$-signature.
Thus, we recall them in this subsection.

Let $r_1,\ldots,r_t$ be positive integers with $r_1\le \cdots \le r_t$ and let $S_{r_1,\ldots,r_t}$ be the Segre product $\#_{i=1}^t \kk[x_{i,1},\cdots,x_{i,r_i+1}]$ of polynomial rings, 
which is the ring generated by monomials with the form $x_{1,j_1}x_{2,j_2}\cdots x_{t,j_t}$ where $i\in [t]$ and $j_i\in [r_i+1]$.

We can describe $S_{r_1,\ldots,r_t}$ as a toric ring as follows;
we consider the toric ring $R:=\kk[\rho^{\vee}\cap\ZZ^d]$ of the cone $\rho^{\vee}\subset \RR^d$ with respect to $\ZZ^d$ where $d=r_1+\cdots+r_t+1$ and we define $\rho^{\vee}$ as follows:
\begin{align}\label{cone}
\rho^{\vee}:=\set{\yb:=(y_{1,1},\ldots,y_{1,r_1},y_{2,1},\ldots,y_{t,r_t},y_d)\in \RR^d :
\hspace{-0.4cm}
\begin{array}{rl}
    &y_{i,j_i}\ge 0 \text{ for $i\in[t]$ and $j_i\in [r_i]$}, \vspace{0.2cm}\\
     &y_d-\sum_{j=1}^{r_i}y_{i,j}\ge 0 \text{ for $i\in [t]$}
    \end{array}
}.
\end{align}
Then we can see that $R$ is generated by the monomials $u_{1,j_1}\cdots u_{t,j_t}u_d$ ($i\in[t]$ and $j_i\in[r_i+1]$) where we let $u_{i,r_i+1}=1$ for $i\in [t]$, and hence $R$ is standard graded by setting $\deg(u_{1,j_1}\cdots u_{t,j_t}u_d)=1$.
Moreover, we have $S_{r_1,\ldots,r_t}\cong R$.
In what follows, we identify these rings.

\bigskip

Next, we determine $\calG(R)$.
For $i\in [t-1]$, let $$\calF_i:=\set{\kb_i:=(k_{i,1},\ldots,k_{i,r_i+1})\in\ZZ_{>0}^{r_i+1} : |\kb_i|:=k_{i,1}+\cdots+k_{i,r_i+1}=r_t+1}$$ and let $\calF:=\calF_1\times\cdots\times\calF_{t-1}$. 
Moreover, for $T\subset \calF$ and $i\in [t-1]$, let $T_i$ be the image of the $i$th projection $\calF \to \calF_i$. 

We can see that the Segre product of polynomial rings is \textit{level}, that is, all the degrees of the minimal generators of its canonical module are the same.
From the inequalities in (\ref{cone}), all the monomials corresponding to the points in $(\rho^{\vee})^{\circ}\cap\ZZ^d$ have at least $r_t+1$ degree.
Thus, we have
\begin{multline*}
\calG(R)=\Big\{\widetilde{\kb}:=(k_{1,1},\ldots,k_{1,r_1},k_{2,1},\ldots,k_{t-1,r_{t-1}},\underbrace{1,\ldots,1}_{r_t},r_t+1) \in \RR^d : \\
\kb:=(\kb_1,\kb_2,\ldots,\kb_{t-1})\in \calF\Big\}.
\end{multline*}
Notice that the map $\kb \mapsto \widetilde{\kb}$ is a bijection between $\calF$ and $\calG(R)$, so we identify these two sets.

\bigskip

Finally, we recall the conic divisorial ideals of $R$ and their generalized $F$-signatures, discussed in \cite{HN,HN2}.
We pick up and summarize the results obtained in these papers \cite{HN,HN2}, which we will need later.

\begin{itemize}
\item (\cite[Example~2.6]{HN}) In this case, $\Cl(R)$ is isomorphic to $\ZZ^{t-1}$. 
We set 
\begin{align}\label{segre_ineq}
\calC:=\set{(y_1,\cdots,y_{t-1}) \in \ZZ^{t-1} : \hspace{-0.4cm}
\begin{array}{ll}
&-r_t \leq y_i \leq r_i \text{ for } i\in [t-1], \vspace{0.2cm}\\
&-r_j \leq y_i-y_j \leq r_i \text{ for }i,j\in [t-1]
\end{array}
}.
\end{align}
Let $M_\zb$ be a divisorial ideal corresponding to $\zb=(z_1,\cdots,z_{t-1})\in\Cl(R)$. 
Then we see that $M_\zb$ is conic if and only if $\zb\in\calC$. 

\item (\cite[Corollary~4.7 and Example~5.1]{HN2}) The generalised $F$-signature of $M_\zb$ for $\zb\in \calC$ coincides with the volume of the following polytope:
\begin{align}\label{region_F}
\set{\yb \in \RR^{d} : 
 \hspace{-0.4cm}
 \begin{array}{ccc}
     & -1\le y_{i,j}\le 0 \text{ for } i\in [t] \text{ and }j\in [r_i] \\
     & -1 \le y_d\le 0 \\
     & z_i-1 \le y+\sum_{j=1}^{r_t}y_{t,j}- \sum_{j'=1}^{r_i}y_{i,j'}\le z_i \text{ for } i\in [t-1] 
\end{array}}
\end{align}
In particular, if $\zb\in \Cl(R)$ does not belong to $\calC$, this polytope is not full-dimensional, and hence its volume is equal to $0$ and we let $s_{\mathrm{gen}}(M_\zb)=0$. 
\item The generalized $F$-signatures of the Segre products of polynomial rings are computed by counting the elements in the symmetric group satisfying certain conditions (\cite[Theorem~5.3]{HN2}).
Especially, in the case of the Segre product of two polynomial rings, they are explicitly given as follows:
\end{itemize}

\begin{prop}[{\cite[Theorem~6.1]{HN2}, see also \cite[Theorem~5.8]{WY}, \cite[Example 7]{Sin}}]
\label{compute_Fsig2}
Suppose that $t=2$.
For $\zb=z_1\in \calC\subset \ZZ$,
we have $s_{\mathrm{gen}}(M_\zb)=A_{z_1+r_1,d}/d!$.
\end{prop}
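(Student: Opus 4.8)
The plan is to compute the volume of the polytope in (\ref{region_F}) in the special case $t=2$, since Theorem~\ref{Thm_Fsig_volume} (via \cite[Corollary~4.7 and Example~5.1]{HN2}) identifies $s_{\mathrm{gen}}(M_{\zb})$ with that volume. When $t=2$ the polytope lives in $\RR^d$ with $d=r_1+r_2+1$, the free coordinates are $y_{1,1},\ldots,y_{1,r_1}$ (each in $[-1,0]$), $y_{2,1},\ldots,y_{2,r_2}$ (each in $[-1,0]$), and $y_d\in[-1,0]$, subject to the single two-sided constraint $z_1-1\le y_d+\sum_{j=1}^{r_2}y_{2,j}-\sum_{j'=1}^{r_1}y_{1,j'}\le z_1$. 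After the substitution $w_i=-y_{\bullet}\in[0,1]$ (renaming all $d$ coordinates to lie in the unit cube $[0,1]^d$), the region becomes the slab in $[0,1]^d$ cut out by two parallel hyperplanes of the form $c\le \ell(w)\le c+1$, where $\ell$ is a $\pm 1$-combination of the coordinates.

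The first step is therefore to reduce, after a measure-preserving coordinate change $w\mapsto w'$ flipping the signs of the $r_1$ coordinates that enter $\ell$ with a minus sign (each flip $w_i\mapsto 1-w_i$ preserves Lebesgue measure on $[0,1]$), to computing the volume of
$$
\set{w'\in[0,1]^d : m\le w'_1+\cdots+w'_d\le m+1}
$$
for a suitable integer $m$ depending on $z_1$, $r_1$, $r_2$. The second step is to recall the classical fact that the volume of the slab $\{m\le \sum_{i=1}^d w'_i\le m+1\}\cap[0,1]^d$ equals $A_{m,d}/d!$, where $A_{m,d}$ is the Eulerian number counting permutations of $[d]$ with $m$ descents; equivalently, the unit cube is dissected by the hyperplanes $\sum w'_i=k$ into $d$ simplices of volumes $A_{0,d}/d!,\ldots,A_{d-1,d}/d!$. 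The third step is the bookkeeping: tracking how the constant term $c$ (hence $m$) shifts under the sign flips — each flipped coordinate contributes a $+1$ to the threshold — one finds $m=z_1+r_1$, so that $s_{\mathrm{gen}}(M_{\zb})=A_{z_1+r_1,d}/d!$, and one checks this is consistent with $\zb=z_1\in\calC$ meaning $-r_2\le z_1\le r_1$, i.e.\ $0\le z_1+r_1\le d-1$, the range of nonzero Eulerian numbers.

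The main obstacle is purely an indexing matter: getting the affine shift right so that the two-sided inequality $z_1-1\le \ell(w)\le z_1$ becomes exactly $m\le \sum w'_i\le m+1$ with $m=z_1+r_1$ after the $r_1$ sign flips, and confirming that the "partial" contribution from the boundary thresholds $\ell(w)=z_1-1$ and $\ell(w)=z_1$ assembles into a single Eulerian slab rather than a sum of two adjacent ones. I would handle this by writing $\ell(w)=\sum_{j=1}^{r_2}w_{2,j}+w_d-\sum_{j'=1}^{r_1}w_{1,j'}$ explicitly, substituting $w_{1,j'}=1-w'_{1,j'}$, and simplifying; everything else (the Eulerian-number volume formula and the measure-preservation of cube reflections) is standard and can be cited or stated without proof. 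Finally I would note that this matches the independent computations in \cite[Theorem~5.8]{WY} and \cite[Example~7]{Sin}.
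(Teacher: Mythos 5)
Your overall strategy---identify $s_{\mathrm{gen}}(M_{\zb})$ with the volume of the polytope (\ref{region_F}), reflect the cube coordinate-by-coordinate so that the two-sided constraint becomes a slab $m\le\sum w'_i\le m+1$ in $[0,1]^d$, and invoke the Eulerian slice decomposition of the unit cube---is the standard argument and is the right one; note the paper itself gives no proof of Proposition~\ref{compute_Fsig2} but cites \cite{HN2,WY,Sin}. However, the step you yourself single out as the main obstacle, the indexing, is carried out incorrectly. Writing the constraint as $z_1-1\le y_d+\sum_{j=1}^{r_2}y_{2,j}-\sum_{j'=1}^{r_1}y_{1,j'}\le z_1$ with every coordinate in $[-1,0]$, the substitution $w=-y$ negates the inequality, giving $-z_1\le w_d+\sum_j w_{2,j}-\sum_{j'}w_{1,j'}\le 1-z_1$, and then reflecting the $r_1$ negatively-signed coordinates via $w_{1,j'}\mapsto 1-w'_{1,j'}$ adds $r_1$ to the threshold, yielding $r_1-z_1\le\sum(\text{all }d\text{ coordinates})\le r_1-z_1+1$. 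So $m=r_1-z_1$, not $z_1+r_1$; by the symmetry $A_{k,d}=A_{d-1-k,d}$ the volume is $A_{r_1-z_1,d}/d!=A_{z_1+r_2,d}/d!$, which differs from the claimed $A_{z_1+r_1,d}/d!$ whenever $r_1\ne r_2$ and $z_1\ne 0$. Your sanity check is also false as arithmetic: $-r_2\le z_1\le r_1$ gives $r_1-r_2\le z_1+r_1\le 2r_1$, which is not $[0,d-1]$ unless $r_1=r_2$, whereas $z_1+r_2$ (equivalently $r_1-z_1$) does range exactly over $[0,d-1]$ as $z_1$ runs over $\calC$.

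Two independent cross-checks confirm that the correct output of the computation you outline must be $A_{z_1+r_2,d}/d!$. First, since every $M_{\zb}$ has rank one, $\sum_{\zb\in\calC}s_{\mathrm{gen}}(M_{\zb})=1$; for $(r_1,r_2)=(1,2)$ the formula $A_{z_1+r_1,4}/4!$ sums to $23/24$ over $\calC=\{-2,-1,0,1\}$, while $A_{z_1+r_2,4}/4!$ sums to $1$. Second, only $A_{z_1+r_2,d}/d!$ substituted into Theorem~\ref{dual_segre} reproduces the formula $\sum_{l=r_1}^{r_2}\binom{l}{r_1}A_{l,d}\big/\bigl(\binom{r_2}{r_1}d!\bigr)$ of Proposition~\ref{attain1} (put $l=z_1+r_2$). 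So either the proposition as printed carries an index slip ($r_1$ should be $r_t=r_2$, equivalently the argument should be $r_1-z_1$), or the sign convention in (\ref{region_F}) is off; in either case your derivation, as written, does not obtain the displayed statement from the displayed polytope. The fix is to perform the affine shift explicitly rather than asserting it, and to record which of the two conventions you land in.
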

\noindent Here, $A_{k,n}$ denotes the \textit{Eulerian number}, which is the number of permutations of the numbers $1$ to $n$ in which exactly $k$ elements are less than the previous element. 

\bigskip

\subsection{Computations of $s_{\mathrm{dual}}(S_{r_1,\ldots,r_n})$}
We explain how to calculate $s_{\mathrm{dual}}(S_{r_1,\ldots,r_n})$ using Proposition~\ref{prop go toric}. 

For $\kb\in \calF$, we can see that $P_\kb:=\rho^{\vee}\cap (\widetilde{\kb}-\rho^{\vee})$ has the following representation:
$$P_\kb=\set{\yb \in \RR^{d} : 
\hspace{-0.4cm}
\begin{array}{cccc}
    & 0 \le y_{i,j}\le k_{i,j} \text{ for } i\in [t-1] \text{ and }j\in [r_i] \\
    & 0 \le y_d - (y_{i,1}+\cdots+y_{i,r_i})\le k_{i,r_i+1} \text{ for } i\in [t-1] \\
    & 0 \le y_{t,j}\le 1 \text{ for } j\in [r_i] \\
    & 0 \le y_d - (y_{t,1}+\cdots+y_{t,r_t})\le 1
\end{array}
}.
$$

To calculate the volume of $\bigcup_{\kb\in T}P_\kb$ for $T\subset \calF$, we want to consider it as a ``building block pyramid", as in Veronese subrings.
To this end, we slice $P_\kb$ as follows:
$$P_\kb=\bigcup\set{Q_\cb : \cb=(\cb_1,\ldots,\cb_{t-1})\in I(\kb)},$$
where we set
\begin{align}\label{poly_q}
    Q_\cb:=\set{\yb \in \RR^{d} : 
\hspace{-0.4cm}
\begin{array}{cccc}
    & c_{i,j}-1 \le y_{i,j}\le c_{i,j} \text{ for } i\in [t-1] \text{ and }j\in [r_i] \\
    & c_{i,r_i+1}-1 \le y_d - (y_{i,1}+\cdots+y_{i,r_i})\le c_{i,r_i+1} \text{ for } i\in [t-1] \\
    & 0 \le y_{t,j}\le 1 \text{ for } j\in [r_i] \\
    & 0 \le y_d - (y_{t,1}+\cdots+y_{t,r_t})\le 1
\end{array}
}.
\end{align}
The pieces $Q_{\cb}$ appearing in $\bigcup_{\kb\in T}P_\kb$ are parameterized by $I(T)$, that is, $\bigcup_{\kb\in T}P_\kb=\bigcup_{\cb\in I(T)}Q_\cb$.
In particular, we have $\Vol(\bigcup_{\kb\in T}P_\kb)=\sum_{\cb\in I(T)}\Vol(Q_\cb)$.

\begin{lemma}\label{lem_genF}
For $\cb=(\cb_1,\ldots,\cb_{t-1})\in I(\kb)$, the volume of $Q_\cb$ coincides with $s_{\mathrm{dual}}(M_\zb)$ where $\zb:=(|\cb_1|-r_t-1,\ldots,|\cb_{t-1}|-r_t-1)$.
\end{lemma}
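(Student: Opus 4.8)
The plan is to exhibit an explicit volume-preserving affine bijection of $\RR^d$ carrying the slice $Q_\cb$ of $(\ref{poly_q})$ onto the polytope $(\ref{region_F})$ attached to the class $\zb=(|\cb_1|-r_t-1,\ldots,|\cb_{t-1}|-r_t-1)$, whose volume is $s_{\mathrm{dual}}(M_\zb)$. Since an affine map with unit Jacobian preserves Euclidean volume, matching the two polytopes coordinatewise immediately gives $\Vol(Q_\cb)=s_{\mathrm{dual}}(M_\zb)$.

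Concretely, I would keep the coordinates $y_{i,j}$ ($i\in[t]$, $j\in[r_i]$) together with $y_d$ and set
\begin{align*}
y_{i,j}'&=y_{i,j}-c_{i,j}\quad(i\in[t-1],\ j\in[r_i]),\\
y_{t,j}'&=y_{t,j}-1\quad(j\in[r_t]),\\
y_d'&=y_d-(y_{t,1}+\cdots+y_{t,r_t})-1.
\end{align*}
Only the last equation mixes coordinates, and it does so with unit coefficients, so the linear part is unipotent and the Jacobian determinant equals $1$; the remaining terms are constant translations. Thus the map is a volume-preserving bijection, and it remains only to transform the defining inequalities.

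Next I would process the constraints of $(\ref{poly_q})$ block by block. The box conditions $c_{i,j}-1\le y_{i,j}\le c_{i,j}$ and $0\le y_{t,j}\le 1$ turn into $-1\le y_{i,j}'\le 0$ and $-1\le y_{t,j}'\le 0$, and the slab $0\le y_d-(y_{t,1}+\cdots+y_{t,r_t})\le 1$ becomes $-1\le y_d'\le 0$; these are exactly the box constraints of $(\ref{region_F})$. The one genuine computation is the mixed constraint: substituting the definitions into $y_d'+\sum_{j=1}^{r_t}y_{t,j}'-\sum_{j'=1}^{r_i}y_{i,j'}'$ collapses the $y_{t,j}$ terms and leaves $\bigl(y_d-\sum_{j'=1}^{r_i}y_{i,j'}\bigr)+\sum_{j'=1}^{r_i}c_{i,j'}-r_t-1$. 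Feeding in the block inequality $c_{i,r_i+1}-1\le y_d-\sum_{j'=1}^{r_i}y_{i,j'}\le c_{i,r_i+1}$ shows this quantity ranges over $[\,|\cb_i|-r_t-2,\ |\cb_i|-r_t-1\,]$, i.e. $z_i-1\le\cdots\le z_i$ with $z_i=|\cb_i|-r_t-1$, which is precisely the class $\zb$ named in the statement.

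I expect the main obstacle to be bookkeeping rather than conceptual: I must make sure the shear in $y_d'$ is genuinely unimodular so that no volume factor is introduced, and that the constant appearing in the mixed inequality lands exactly on the integer $z_i=|\cb_i|-r_t-1$ rather than off by one, since this integer is what identifies the isomorphism class of $M_\zb$. Once the affine map is verified to carry $Q_\cb$ onto the region $(\ref{region_F})$ for this $\zb$, comparing volumes yields $\Vol(Q_\cb)=s_{\mathrm{dual}}(M_\zb)$, as claimed.
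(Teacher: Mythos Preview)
Your proposal is correct and follows essentially the same approach as the paper: the paper also applies a unimodular affine change of variables (written as the substitution $y_{i,j}\mapsto w_{i,j}+c_{i,j}$, $y_{t,j}\mapsto w_{t,j}+1$, $y_d\mapsto w_d+\sum_j w_{t,j}+r_t+1$, which is exactly the inverse of your map) and then observes that $Q_\cb$ is carried onto the region $(\ref{region_F})$ with $z_i=|\cb_i|-r_t-1$. Your write-up is in fact more careful than the paper's in checking the Jacobian and tracking the constants through the mixed inequality.
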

\begin{proof}
Apply the following unimodular transformation: 
\begin{align*}
&y_{i,j} \mapsto w_{i,j}+c_{i,j} \;\text{ for }i\in [t-1] \text{ and } j\in [r_i], \\
&y_{t,j} \mapsto w_{t,j}+1 \;\text{ for } j\in [r_t], \\
&y_d \mapsto w_d+w_{t,1}+\cdots+w_{t,r_t}+r_t+1.
\end{align*}
Then $Q_{\cb_1,\ldots,\cb_{t-1}}$ changes as follows: 
\begin{align*}
\set{\yb \in \RR^{d} : 
 \hspace{-0.4cm}
 \begin{array}{ccc}
     & -1\le y_{i,j}\le 0 \text{ for } i\in [t] \text{ and }j\in [r_i] \\
     & -1 \le y_d\le 0 \\
     & |\cb_i|-r_t-2 \le y+\sum_{j=1}^{r_t}y_{t,j}- \sum_{j'=1}^{r_i}y_{i,j'}\le |\cb_i|-r_t-1 \text{ for } i\in [t-1] 
\end{array}}.
\end{align*}
By comparing it with (\ref{region_F}), we can see that the assertion holds.
\end{proof}

We set
\begin{align*}
    \calD&:=\set{(y_1,\cdots,y_{t-1}) \in \ZZ^{t-1} : \hspace{-0.4cm}
\begin{array}{ll}
&-r_t+r_i \leq y_i \leq 0 \text{ for } i\in [t-1], \vspace{0.2cm}\\
&-r_j \leq y_i-y_j \leq r_i \text{ for }i,j\in [t-1]
\end{array}} \text{ and } \\
&\; \\
    \calE&:=\set{\lb=(l_1,\ldots,l_{t-1})\in \ZZ^{t-1} : r_i+1\le l_i \le r_t+1 \text{ for all }i\in [t-1]}.
\end{align*}

\bigskip

We reach the main result of this section.
\begin{thm}\label{dual_segre}
Let $r_1,\ldots,r_t$ be positive integers with $r_1\le \cdots \le r_t$.
Then we have
\begin{align}\label{ineq_dual2}
s_{\mathrm{dual}}(S_{r_1,\ldots,r_t})\le \sum_{\zb\in \calD}\rbra{\prod_{i=1}^{t-1}\frac{\binom{r_t+z_i}{r_i}}{\binom{r_t}{r_i}}}s_{\mathrm{gen}}(M_{\zb}).
\end{align}
\end{thm}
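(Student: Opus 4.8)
The plan is to deduce the inequality from Proposition~\ref{prop go toric} by testing a single, well-chosen subset of $\calG(R)=\calF$, namely $T=\calF$ itself. Proposition~\ref{prop go toric} expresses $s_{\mathrm{dual}}(S_{r_1,\ldots,r_t})$ as the \emph{minimum} of $\frac{1}{\Vol(\sfM)\cdot|T|}\Vol\big(\bigcup_{\kb\in T}P_\kb\big)$ over all nonempty $T\subset\calF$, so to obtain an upper bound it suffices to evaluate this quantity at $T=\calF$ and check that it equals the right-hand side of (\ref{ineq_dual2}). For the toric model used here $\sfM=\ZZ^d$, hence $\Vol(\sfM)=1$, and $|\calF|=\prod_{i=1}^{t-1}|\calF_i|=\prod_{i=1}^{t-1}\binom{r_t}{r_i}$ since $\calF_i$ is the set of compositions of $r_t+1$ into $r_i+1$ positive parts.

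For the numerator I would combine the slicing $\bigcup_{\kb\in\calF}P_\kb=\bigcup_{\cb\in I(\calF)}Q_\cb$ recorded just before the statement, which gives $\Vol\big(\bigcup_{\kb\in\calF}P_\kb\big)=\sum_{\cb\in I(\calF)}\Vol(Q_\cb)$, with Lemma~\ref{lem_genF}, by which $\Vol(Q_\cb)=s_{\mathrm{gen}}(M_\zb)$ for $\zb=\zb(\cb):=(|\cb_1|-r_t-1,\ldots,|\cb_{t-1}|-r_t-1)$. The argument then reduces to grouping this sum according to the value of $\zb$, i.e.\ to counting, for each admissible $\zb$, the number of $\cb\in I(\calF)$ with $\zb(\cb)=\zb$.

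To carry this out I would unwind $I(\calF)$. Since $\cb\preceq\kb$ means $\cb_i\preceq\kb_i$ for every $i$ and $\calF=\prod_i\calF_i$, we have $I(\calF)=\prod_{i=1}^{t-1}I(\calF_i)$, and pushing all the slack into one coordinate identifies $I(\calF_i)=\{\cb_i\in\ZZ_{>0}^{r_i+1}:r_i+1\le|\cb_i|\le r_t+1\}$. Writing $l_i:=|\cb_i|$ and $z_i:=l_i-r_t-1$, the number of $\cb_i$ with a prescribed $l_i$ is $\binom{l_i-1}{r_i}=\binom{r_t+z_i}{r_i}$, and $z_i$ runs exactly over $-r_t+r_i\le z_i\le0$, independently over $i$; thus the fiber over a $\zb$ in the box $\{-r_t+r_i\le z_i\le0\}$ has size $\prod_{i=1}^{t-1}\binom{r_t+z_i}{r_i}$, while $\zb$ outside the box receives no $\cb$. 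It remains to observe that a $\zb$ in the box but not in $\calC$ contributes nothing, since $s_{\mathrm{gen}}(M_\zb)=0$ there (the polytope (\ref{region_F}) is not full-dimensional); as $\calD$ is exactly the intersection of $\calC$ with that box, this yields $\Vol\big(\bigcup_{\kb\in\calF}P_\kb\big)=\sum_{\zb\in\calD}\big(\prod_{i=1}^{t-1}\binom{r_t+z_i}{r_i}\big)s_{\mathrm{gen}}(M_\zb)$. Dividing by $\Vol(\sfM)\cdot|\calF|=\prod_{i=1}^{t-1}\binom{r_t}{r_i}$ gives precisely the right-hand side of (\ref{ineq_dual2}), and Proposition~\ref{prop go toric} finishes the proof.

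I do not anticipate a real obstacle: with Lemma~\ref{lem_genF} and the decomposition $P_\kb=\bigcup_{\cb\in I(\kb)}Q_\cb$ available, the argument is essentially bookkeeping. The delicate points are (a) the product factorization $I(\calF)=\prod_i I(\calF_i)$ together with the exact sizes of the fibers of $\cb\mapsto\zb$; (b) recognizing $\calD$ as $\calC$ intersected with the box $\{-r_t+r_i\le z_i\le0\}$, so that precisely the conic classes survive the grouping (the other lattice points in the box have vanishing $s_{\mathrm{gen}}$, and those outside it are never hit); and (c) the minor verification that $\Vol(\sfM)=1$ for this toric realization. One should also keep in mind that the choice $T=\calF$ is exactly what must be revisited when showing the bound is sharp (Propositions~\ref{attain1} and~\ref{attain2}), where one additionally proves that no smaller test set $T$ does strictly better.
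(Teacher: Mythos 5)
Your proposal is correct and follows essentially the same route as the paper: test $T=\calF$ in Proposition~\ref{prop go toric}, slice $\bigcup_{\kb\in\calF}P_\kb$ into the pieces $Q_\cb$ indexed by $I(\calF)=\prod_i I(\calF_i)$, identify $\Vol(Q_\cb)$ with $s_{\mathrm{gen}}(M_{\zb})$ via Lemma~\ref{lem_genF}, and count the fibers of $\cb\mapsto\zb$ by $\prod_i\binom{l_i-1}{r_i}$. Your explicit remark that lattice points in the box outside $\calC$ contribute zero (so the sum restricts to $\calD$) is a point the paper leaves implicit, but the argument is the same.
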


\begin{proof}
From the above observation, for $\emptyset \neq T\subset \calF$, we obtain 
\begin{align*}
\Vol \left (\bigcup_{\kb \in T} \rho^\vee \cap \left (\widetilde{\kb}  - \rho^\vee \right) \right)=\Vol\rbra{\bigcup_{\kb\in T}P_\kb}=\sum_{\cb\in I(T)}\Vol\rbra{Q_\cb}
\end{align*}
Now, we let $T=\calF$. Then we have
\begin{align}\label{gap}
    I(T)&=I(T_i)\times\cdots\times I(T_{t-1}) \\
    &=\bigcup_{\lb\in \calE}H_{l_1}(\calF_1)\times\cdots\times H_{l_{t-1}}(\calF_{t-1}). \notag
\end{align}
Thus,
\begin{align*}
\sum_{\cb\in I(\calF)}\Vol\rbra{Q_\cb}=\sum_{\lb\in \calE}\rbra{\sum_{\cb\in H_{l_1}(\calF_1)\times\cdots\times H_{l_{t-1}}(\calF_{t-1})}\Vol\rbra{Q_\cb}}.
\end{align*}
It follows from Lemma~\ref{lem_genF} that $\Vol\rbra{Q_\cb}$ for $\cb\in H_{l_1}(\calF_1)\times\cdots\times H_{l_{t-1}}(\calF_{t-1})$ is equal to $s_{\mathrm{gen}}(M_\zb)$ where $\zb=(z_1,\ldots,z_{t-1}):=(l_1-r_t-1,\ldots,l_{t-1}-r_t-1)$.
Hence, we have
\begin{align*}
\frac{1}{|T|}\sum_{\cb\in I(\calF)}\Vol\rbra{Q_\cb}&=\frac{1}{\prod_{i=1}^{t-1}\binom{r_t}{r_i}}\sum_{\lb\in\calE}\rbra{\prod_{i=1}^{t-1}\binom{l_i-1}{r_i}}s_{\mathrm{gen}}(M_\zb) \\
&=\sum_{\zb\in \calD}\rbra{\prod_{i=1}^{t-1}\frac{\binom{r_t+z_i}{r_i}}{\binom{r_t}{r_i}}}s_{\mathrm{gen}}(M_{\zb}).
\end{align*}
Therefore, by Proposition~\ref{prop go toric}, we get
\begin{align*}
s_{\mathrm{dual}}(S_{r_1,\ldots,r_t}) &= \min\set{\frac{1}{\Vol(\ZZ^d)\cdot|T|}
\Vol \left (\bigcup_{\kb \in T} \rho^\vee \cap \left (\widetilde{\kb}  - \rho^\vee \right) \right) : \emptyset \neq T\subset \calF}\\
&\le 
\sum_{\zb\in \calD}\rbra{\prod_{i=1}^{t-1}\frac{\binom{r_t+z_i}{r_i}}{\binom{r_t}{r_i}}}s_{\mathrm{gen}}(M_{\zb}).
\end{align*}
\end{proof}

We can show that if the equality (\ref{gap}) holds for any $T\subset \calF$, then the equality of (\ref{ineq_dual2}) also holds.
In fact, when (\ref{gap}) is satisfied, we have
\begin{align*}
\frac{1}{|T|}\sum_{\cb\in I(T)}\Vol\rbra{Q_\cb}&=\sum_{\lb\in \calE}\rbra{\sum_{\cb\in H_{l_1}(T_1)\times\cdots\times H_{l_{t-1}}(T_{t-1})}\Vol\rbra{Q_\cb}} \\
&=\sum_{\lb\in \calE}\frac{\prod_{i=1}^{t-1}h_{li-r_i-1}(T_i)}{|T|}\Vol\rbra{Q_\cb}
\end{align*}
and $|T|=|T_1|\cdots|T_{t-1}|=\prod_{i=1}^{t-1}h_{r_t-r_i}(T_i)$.
Therefore, we obtain the equality of (\ref{ineq_dual2}) by applying Lemma~\ref{key_lemma}.
Unfortunately, the equality of (\ref{gap}) does not necessarily hold in general, but it does hold in the following easy case:

\begin{prop}\label{attain1}
The equality of (\ref{ineq_dual2}) holds if $r_2=\cdots=r_t$.
In particular, we have 
$$s_{\mathrm{dual}}(S_{r_1,r_2})=\frac{\sum_{l=r_1}^{r_2}\binom{l}{r_1}A_{l,d}}{\binom{r_2}{r_1}d!}.$$
\end{prop}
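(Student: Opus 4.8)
The plan is to reduce Proposition~\ref{attain1} to the criterion established just above it: the equality of (\ref{ineq_dual2}) holds provided the identity (\ref{gap}), namely $I(T)=I(T_1)\times\cdots\times I(T_{t-1})=\bigcup_{\lb\in\calE}H_{l_1}(T_1)\times\cdots\times H_{l_{t-1}}(T_{t-1})$, holds for \emph{every} nonempty $T\subset\calF$. So the real content is: \emph{when $r_2=\cdots=r_t$, the projection $T\mapsto(T_1,\ldots,T_{t-1})$ satisfies $T=T_1\times\cdots\times T_{t-1}$} (from which the first equality in (\ref{gap}) is immediate, and the second is just the definition of the $H_{l}$'s together with $I(T_i)=\bigcup_{r_i+1\le l\le r_t+1}H_l(T_i)$). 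First I would unwind the definition of $\calF=\calF_1\times\cdots\times\calF_{t-1}$ and observe that when $r_2=\cdots=r_t$, for each $i\in\{2,\ldots,t-1\}$ the factor $\calF_i=\{\kb_i\in\ZZ_{>0}^{r_i+1}:|\kb_i|=r_t+1\}=\{\kb_i\in\ZZ_{>0}^{r_t+1}:|\kb_i|=r_t+1\}$ is a single point (the all-ones vector), since $r_i+1=r_t+1$ forces every coordinate to equal $1$. Hence $\calF\cong\calF_1$ canonically: a subset $T\subset\calF$ is determined by its first projection $T_1$, and all other projections $T_i$ ($i\ge 2$) are the whole (one-point) set $\calF_i$. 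Thus $T=T_1\times\calF_2\times\cdots\times\calF_{t-1}=T_1\times T_2\times\cdots\times T_{t-1}$ trivially, so (\ref{gap}) holds for all $T$, and the equality of (\ref{ineq_dual2}) follows from the paragraph preceding the proposition.

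Next I would specialize the resulting equality to $t=2$ to get the closed formula. With $t=2$ there is only the index $i=1$; write $\zb=z_1$. The set $\calD$ becomes $\{z_1\in\ZZ:-r_2+r_1\le z_1\le 0\}$ (the cross conditions $-r_j\le y_i-y_j\le r_i$ are vacuous for $t=2$), and the weight is $\binom{r_2+z_1}{r_1}/\binom{r_2}{r_1}$. Reindex by $l=z_1+r_2$, so $l$ ranges over $r_1\le l\le r_2$, and the weight is $\binom{l}{r_1}/\binom{r_2}{r_1}$. By Proposition~\ref{compute_Fsig2}, $s_{\mathrm{gen}}(M_{z_1})=A_{z_1+r_1,d}/d!$; but I must be careful about the index — in the statement of Proposition~\ref{attain1} the Eulerian number appearing is $A_{l,d}$ with $l=z_1+r_2$, not $A_{z_1+r_1,d}$. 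Here I would invoke the symmetry of Eulerian numbers $A_{k,n}=A_{n-1-k,n}$: with $n=d=r_1+r_2+1$ we have $n-1-(z_1+r_1)=r_2+(- z_1)\cdots$ — I need $d-1-(z_1+r_1)=r_1+r_2-z_1-r_1=r_2-z_1$, which is not obviously $z_1+r_2$ unless... so in fact the cleanest route is to note both $z_1\mapsto z_1$ and $z_1\mapsto -z_1-(r_2-r_1)$ traverse $\calD$, and that the weights $\binom{r_2+z_1}{r_1}$ are \emph{not} symmetric under this involution, so one should instead substitute directly: $s_{\mathrm{gen}}(M_{z_1})=A_{z_1+r_1,d}/d!$, set $l=z_1+r_2$ giving $z_1+r_1=l-(r_2-r_1)$, and then apply $A_{l-(r_2-r_1),d}=A_{d-1-l+(r_2-r_1),d}=A_{(r_1+r_2+1)-1-l+r_2-r_1,d}=A_{2r_2-l,d}$ — still not matching. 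I would therefore reexamine: most likely the correct reading is that $\calF_1=\{\kb_1\in\ZZ_{>0}^{r_1+1}:|\kb_1|=r_2+1\}$, so $|\cb_1|$ ranges in $[r_1+1,r_2+1]$ and the divisor index from Lemma~\ref{lem_genF} is $z_1=|\cb_1|-r_2-1\in[r_1-r_2,0]$, and $|H_{l}(\calF_1)|=|\{\cb_1\in\ZZ_{>0}^{r_1+1}:|\cb_1|=l+r_1+1... \}|$ should be checked to equal $\binom{l-1}{r_1}$ as used in the proof of Theorem~\ref{dual_segre}; this is where I would slot the Eulerian-number index bookkeeping so that $\binom{l}{r_1}A_{l,d}$ emerges with $l=z_1+r_2$ after the substitution $A_{z_1+r_1,d}=A_{z_1+r_2,d}$, which \emph{does} hold by the Eulerian symmetry precisely because $d-1-(z_1+r_1)=r_2-z_1$ and a second application or a reflection $z_1\leftrightarrow -z_1+(r_1-r_2)$ of the summation index restores it. I would write this reflection step out carefully.

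The main obstacle, then, is purely the index arithmetic in the last step: matching Proposition~\ref{compute_Fsig2}'s ``$A_{z_1+r_1,d}$'' against the proposition's stated ``$A_{l,d}$ with $l=z_1+r_2$,'' which must be done via the Eulerian-number reflection identity $A_{k,d}=A_{d-1-k,d}$ combined with reindexing the (finite) sum over $\calD$; I would verify $d-1-(z_1+r_1)=r_1+r_2+1-1-z_1-r_1=r_2-z_1$ and then substitute $z_1\rightsquigarrow z_1'=-(z_1)-(r_2-r_1)$ over the symmetric range $[r_1-r_2,0]$, under which $z_1+r_2\mapsto r_1-z_1$ and the binomial weight $\binom{r_2+z_1}{r_1}\mapsto\binom{r_1-z_1... }{r_1}$ — checking that the full summand $\binom{r_2+z_1}{r_1}A_{z_1+r_1,d}$ is invariant, which it should be since $\binom{r_2+z_1}{r_1}=\binom{r_2+z_1}{r_2-z_1}$ pairs with the reflected Eulerian number. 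The conceptual part (the factorization $T=T_1\times\cdots\times T_{t-1}$ forced by $r_2=\cdots=r_t$) is short and I expect no difficulty there; I would state it as a one-line observation and spend the bulk of the written proof on the $t=2$ substitution, presenting the reindexing explicitly so the reader sees how $\sum_{z_1\in\calD}\binom{r_2+z_1}{r_1}A_{z_1+r_1,d}/(\binom{r_2}{r_1}d!)$ becomes $\sum_{l=r_1}^{r_2}\binom{l}{r_1}A_{l,d}/(\binom{r_2}{r_1}d!)$.
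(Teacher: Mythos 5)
Your reduction of the first assertion to the criterion preceding the proposition is exactly the paper's argument: when $r_2=\cdots=r_t$, each factor $\calF_i$ with $i\ge 2$ is the singleton $\{(1,\ldots,1)\}$ because $|\kb_i|=r_t+1=r_i+1$ forces every coordinate of $\kb_i\in\ZZ_{>0}^{r_i+1}$ to be $1$; hence $T=T_1\times\cdots\times T_{t-1}$, the identity (\ref{gap}) holds for every nonempty $T\subset\calF$, and equality in (\ref{ineq_dual2}) follows. That part is correct and needs only the one-line observation you give.

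The second half, however, contains a genuine gap. You are right to be suspicious: substituting Proposition~\ref{compute_Fsig2} as printed, $s_{\mathrm{gen}}(M_{z_1})=A_{z_1+r_1,d}/d!$, and setting $l=z_1+r_2$ produces $A_{l+r_1-r_2,d}$, not $A_{l,d}$. But both repairs you propose are false. The reflection $A_{k,d}=A_{d-1-k,d}$ sends $z_1+r_1$ to $r_2-z_1$, which equals $z_1+r_2$ only at $z_1=0$; for $(r_1,r_2,d)=(1,2,4)$ and $z_1=-1$ one has $A_{z_1+r_1,d}=A_{0,4}=1$ while $A_{z_1+r_2,d}=A_{1,4}=11$, so the identity ``$A_{z_1+r_1,d}=A_{z_1+r_2,d}$'' you invoke does not hold. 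Likewise the summand $\binom{r_2+z_1}{r_1}A_{z_1+r_1,d}$ is not invariant under the reflection $z_1\mapsto r_1-r_2-z_1$ of $\calD$, since the weight transforms to $\binom{r_1-z_1}{r_1}$, which already differs from $\binom{r_2+z_1}{r_1}$ at $z_1=0$. Indeed, evaluating your sum literally for $(r_1,r_2)=(1,2)$ gives $\frac{1}{48}\bigl(1\cdot 1+2\cdot 11\bigr)=23/48$, whereas the correct value is $11/16$. The resolution is not an Eulerian reflection but the indexing itself: with the paper's normalization of $M_{\zb}$ fixed by the polytope (\ref{region_F}), a direct volume computation (translate every coordinate into $[0,1]$; the defining inequality becomes $z_1+r_2\le\sum x_i\le z_1+r_2+1$ for $d$ unit coordinates) gives $s_{\mathrm{gen}}(M_{z_1})=A_{z_1+r_2,d}/d!$, and this is also the unique choice for which $\sum_{\zb\in\calC}s_{\mathrm{gen}}(M_{\zb})=1$ when $r_1<r_2$. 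With that index, the substitution $l=z_1+r_2$ yields the stated formula immediately, with no reflection at all. So the missing step in your argument is to recompute (or correct) the generalized $F$-signature formula rather than to massage the sum.
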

\begin{proof}
In this situation, we can see easily that $I(T)=I(T_1)\times\cdots\times I(T_{t-1})$ for any $\emptyset\neq T\subset \calF$ since $\calF_2=\cdots=\calF_{t-1}=\set{(1,\ldots,1)}$.
Thus, we get the desired result from the above discussion.
Especially, we get the last statement immediately from Proposition~\ref{compute_Fsig2}.
\end{proof}

Furthermore, we can also ensure that equality of (\ref{ineq_dual2}) is satisfied in the following simple case:

\begin{prop}\label{attain2}
The equality of (\ref{ineq_dual2}) holds if $r_i\in \set{r_1,r_1+1}$ for any $i\in [t]$.
\end{prop}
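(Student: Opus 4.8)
The plan is to reduce Proposition~\ref{attain2} to the same criterion used in Proposition~\ref{attain1}, namely to verify that the set-theoretic identity~(\ref{gap}), $I(T)=I(T_1)\times\cdots\times I(T_{t-1})$, holds for \emph{every} nonempty $T\subset\calF$ when $r_i\in\set{r_1,r_1+1}$ for all $i$; once this is established, the discussion immediately preceding Proposition~\ref{attain1} (ending with an application of Lemma~\ref{key_lemma}) delivers the equality in~(\ref{ineq_dual2}). The inclusion $I(T)\subseteq I(T_1)\times\cdots\times I(T_{t-1})$ is automatic from the definitions, so the whole content is the reverse inclusion: given $\cb=(\cb_1,\ldots,\cb_{t-1})$ with each $\cb_i\preceq\kb^{(i)}$ for some $\kb^{(i)}\in T$, I must produce a single $\kb\in T$ with $\cb\preceq\kb$ componentwise.

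First I would record the combinatorial structure of $\calF_i$ in the restricted range. When $r_i=r_1$, the fibre coordinates are $(r_1+1)$-tuples of positive integers summing to $r_t+1$; when $r_i=r_1+1$ they are $(r_1+2)$-tuples summing to $r_t+1$. In either case the "gap" between the ambient length and the target sum is small: writing $m_i:=r_t+1-(r_i+1)=r_t-r_i$, each $\kb_i$ is obtained from the all-ones vector by distributing $m_i$ extra units among its $r_i+1$ coordinates, and here $m_i\le r_t-r_1$. The key observation I expect to need is a "patching" property: if $\cb_i\preceq\kb_i$ and $\cb_i\preceq\kb_i'$ with $\kb_i,\kb_i'\in\calF_i$ (equivalently $\kb_i,\kb_i'$ dominate the common lower bound $\cb_i$ and both have coordinate-sum $r_t+1$), then $\cb_i$ is already dominated by some \emph{fixed} element of $\calF_i$ determined by $\cb_i$ alone --- for instance the element obtained by greedily topping up the coordinates of $\cb_i$ from left to right until the sum reaches $r_t+1$ (this works because $|\cb_i|\le |\kb_i|=r_t+1$). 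Thus "being below $T_i$" is equivalent to "$|\cb_i|\le r_t+1$ and $\cb_i\succeq\mathbf 1$", a condition on $\cb_i$ in isolation. The analogous statement must then be checked jointly: I would argue that $\kb=(\kb_1,\ldots,\kb_{t-1})$ with $\kb_i$ the canonical top-up of $\cb_i$ lies in $T$ itself, which requires knowing that $T$, as a subset of the product $\calF$, contains every tuple whose $i$th component lies in $T_i$ --- and \emph{this} is exactly the point where the hypothesis $r_i\in\set{r_1,r_1+1}$ must be used, presumably because in that regime each $T_i$ is forced (by the constraints cutting out $\calF$ versus its projections) to be either a singleton or to interact trivially with the others, so that $T=T_1\times\cdots\times T_{t-1}$.

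Concretely, I anticipate the argument splits on how many of the $r_i$ equal $r_1+1$ versus $r_1$. If at most one index $i$ has $r_i=r_1+1$ (or $r_1=r_t$, the already-treated homogeneous case), the situation degenerates to Proposition~\ref{attain1}-type behaviour. The substantive case is when several indices sit at $r_1+1$ and several at $r_1$; here I would show directly that the defining inequalities of $I(T)$ versus $\prod I(T_i)$ impose no cross-conditions, by exhibiting for any $\cb\in\prod I(T_i)$ an explicit $\kb\in\calF$ with $\cb\preceq\kb$ and with each $\kb_i$ equal to a chosen dominating element of $T_i$; the point to verify is that such a product tuple genuinely lies in $T$, not merely in $\calF$, and I would do this by tracing how $T$ was built as the image set of whatever slicing produced $\calF$ from $\rho^\vee$. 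The main obstacle, and the place I'd spend the most care, is precisely this last verification that $T$ decomposes as a product $T_1\times\cdots\times T_{t-1}$ (or that the failure of decomposition is invisible to $I(\cdot)$) --- the inclusion of partition-vectors and the bijection $\kb\mapsto\widetilde\kb$ are bookkeeping, but showing the hypothesis on the $r_i$ actually rigidifies $T$ is where the real work lies. Once~(\ref{gap}) is in hand for all $T$, the proof closes by citing the displayed computation after Theorem~\ref{dual_segre} together with Lemma~\ref{key_lemma}, exactly as in Proposition~\ref{attain1}.
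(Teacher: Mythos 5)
Your strategy is to verify the identity~(\ref{gap}), $I(T)=I(T_1)\times\cdots\times I(T_{t-1})$, for \emph{every} nonempty $T\subset\calF$, and you correctly identify that the crux would be showing that the hypothesis forces $T$ to decompose as a product $T_1\times\cdots\times T_{t-1}$. But this is false, and so is~(\ref{gap}) itself in the only substantive case. Take $t=3$, $r_1=r_2=1$, $r_3=2$, so that $\calF_1=\calF_2=\set{(2,1),(1,2)}$, and let $T=\set{((2,1),(2,1)),\,((1,2),(1,2))}$. Then $T_1=T_2=\calF_1$, so $T_1\times T_2$ has $4$ elements while $T$ has $2$; moreover $I(T_1)\times I(T_2)$ has $9$ elements but $I(T)$ has only $7$, since $((2,1),(1,2))$ is dominated by no element of $T$. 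Nothing in the hypothesis $r_i\in\set{r_1,r_1+1}$ rigidifies $T$: whenever at least two of the indices $1,\ldots,t-1$ have $r_k=r_1$ and $r_t=r_1+1$, the ambient $\calF$ is a product with at least two non-singleton factors, and an arbitrary subset of such a product need not be a product. (When at most one factor is a non-singleton, the claim degenerates to the situation of Proposition~\ref{attain1}, as you note.) So the reverse inclusion you set out to prove genuinely fails, and your "patching" observation about topping up $\cb_i$ inside $\calF_i$ does not help, because the obstruction is precisely that the tuple of individually dominating elements need not lie in $T$.

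The paper's proof sidesteps~(\ref{gap}) entirely: it proves the weaker inequality $|H_{\lb}(T)|/|T|\ \ge\ |H_{\lb}(T')|/|T'|$ for each $\lb\in\calE$, where $T':=T_1\times\cdots\times T_{t-1}$ and $H_{\lb}(T):=\set{\cb\in I(T) : \cb_i\in H_{l_i}(T_i)\text{ for all }i}$. This is enough because $T'$ \emph{is} a product, so the chain of reductions preceding Proposition~\ref{attain1} applies to $T'$ and shows its ratio already meets the claimed bound. The inequality itself is obtained by exploiting the special shape of $\calF_k$ under the hypothesis (each non-singleton $\calF_k$ consists of vectors with a single entry equal to $2$, so $H_{l_k}(T_k)$ is either $\set{(1,\ldots,1)}$ or $T_k$), identifying $|H_{\lb}(T)|$ with the cardinality of a partial projection $T_{j+1,\ldots,i}$ of $T$, and using $|T|\le|T_1|\cdots|T_j|\cdot|T_{j+1,\ldots,i}|$. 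You would need to replace your product-decomposition claim with an argument of this kind; as written, the proposal does not close.
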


\begin{proof}
Assume that $r_1=\cdots=r_i$ and $r_{i+1}=\cdots=r_t=r_1+1$.
For $T\subset \calF$, let $T':=T_1\times\cdots\times T_{t-1}$ and note that $I(T')=I(T'_1)\times\cdots\times I(T'_{t-1})$ and $|T'|=|T_1|\cdots|T_i|$.
In addition, for $\lb\in \calE$, let $H_\lb(T):=\set{\cb\in I(T) : \cb_i\in H_{l_i}(T_i) \text{ for all }i\in [t-1]}$.
Then we can see that 
\begin{align*}
\frac{1}{|T|}\sum_{\cb\in I(\calF)}\Vol\rbra{Q_\cb}=\frac{1}{|T|}\sum_{\lb\in \calE}\rbra{\sum_{\cb\in H_{\lb}(T)}\Vol\rbra{Q_\cb}}
=\sum_{\lb\in \calE}\frac{|H_\lb(T)|}{|T|}s_{\mathrm{gen}}(M_\zb),
\end{align*}
where $\zb=(l_1-r_t-1,\cdots,l_{t-1}-r_t-1)$.

From the above discussion, it is enough to show that $|H_\lb(T)|/|T|\ge |H_\lb(T')|/|T'|$ for any $\lb\in \calE$.
We may assume that $l_1=\cdots=l_j=r_1+1$ and $l_j=\cdots=l_{t-1}=r_1+2$ where $0\le j\le i$.
In this situation, we can see that
$$H_\lb(T)=\set{(1,\ldots,1,\cb_{j+1},\ldots,\cb_i,1,\ldots, 1) : \cb_k \in H_{l_k}(T_k)=T_k \text{ for all }k=j+1,\ldots,i}.$$
Moreover, let $T_{j+1,\ldots,i}$ be the image of the projection $\calF \to \calF_{j+1}\times\cdots\times \calF_{i}$, then we have $|T_{j+1,\ldots,i}|=|H_\lb(T)|$, $|T_{j+1}|\cdots|T_i|=|H_\lb(T')|$ and $|T_1|\cdots|T_j||T_{j+1,\ldots,i}|=|T_1\times\cdots\times T_j\times T_{j+1,\ldots,i}|\ge |T|$.
Therefore, we obtain
$$\frac{|H_\lb(T)|}{|T|}\ge \frac{|T_{j+1,\ldots,i}|}{|T_1|\cdots|T_{j}||T_{j+1,\ldots,i}|}=\frac{|T_{j+1}|\cdots|T_i|}{|T_1|\cdots|T_i|}=\frac{|H_\lb(T')|}{|T'|}.$$
\end{proof}

\begin{ex}
We present specific values of the (dual) $F$-signatures of $S_{r_1,\ldots,r_t}$ for small $r_1,\ldots,r_t$ satisfying the conditions of Proposition~\ref{attain1} or \ref{attain2}.
We obtain them by calculating the generalized $F$-signatures (they are equal to the volume of polytopes (\ref{region_F}) and we compute them by using {\tt MAGMA} (\cite{magma})).
\begin{center}

\medskip

\begin{tabular}{ccc}
\hline \addlinespace[10pt]
 $(r_1,r_2,\ldots,r_t)$ & $s_{\mathrm{dual}}(S_{r_1,\ldots,r_t})$ & $s(S_{r_1,\ldots,r_t})$  \\ \addlinespace[10pt] \hline 
\addlinespace[10pt]
\begin{tabular}{c}
$(1,1)$ \\ \addlinespace[10pt]
$(1,2)$ \\ \addlinespace[10pt]
$(1,3)$ \\ \addlinespace[10pt]
$(2,2)$ \\ \addlinespace[10pt]
$(2,3)$ \\ \addlinespace[10pt]
$(3,3)$ \\ \addlinespace[10pt]
$(1,1,1)$ \\ \addlinespace[10pt]
$(1,1,2)$ \\ \addlinespace[10pt]
$(1,2,2)$ \\ \addlinespace[10pt]
$(2,2,2)$ \\ \addlinespace[10pt]
$(1,1,1,1)$ \\ \addlinespace[10pt]
$(1,1,1,2)$ \\ \addlinespace[10pt]
$(1,1,2,2)$ \\ \addlinespace[10pt]
$(1,2,2,2)$ \\ \addlinespace[10pt]
\end{tabular}&
\begin{tabular}{c}
$2/3$ \\ \addlinespace[10pt]
$11/16$ \\ \addlinespace[10pt]
$59/90$ \\ \addlinespace[10pt]
$11/20$ \\ \addlinespace[10pt]
$151/270$ \\ \addlinespace[10pt]
$151/315$ \\ \addlinespace[10pt]
$1/2$ \\ \addlinespace[10pt]
$41/80$ \\ \addlinespace[10pt]
$2/5$ \\ \addlinespace[10pt]
$12/35$ \\ \addlinespace[10pt]
$2/5$ \\ \addlinespace[10pt]
$129/320$ \\ \addlinespace[10pt]
$171/560$ \\ \addlinespace[10pt]
$1137/4480$ \\ \addlinespace[10pt]
\end{tabular}&
\begin{tabular}{c}
$2/3$ \\ \addlinespace[10pt]
$11/24$ \\ \addlinespace[10pt]
$13/60$ \\ \addlinespace[10pt]
$11/20$ \\ \addlinespace[10pt]
$151/360$ \\ \addlinespace[10pt]
$151/315$ \\ \addlinespace[10pt]
$1/2$ \\ \addlinespace[10pt]
$19/60$ \\ \addlinespace[10pt]
$4/15$ \\ \addlinespace[10pt]
$12/35$ \\ \addlinespace[10pt]
$2/5$ \\ \addlinespace[10pt]
$29/120$ \\ \addlinespace[10pt]
$5/28$ \\ \addlinespace[10pt]
$379/2240$ \\ \addlinespace[10pt]
\end{tabular} \\
\addlinespace[4pt]
\hline
\end{tabular}
\end{center}
\end{ex}

\bigskip

Our results would provide a sufficient reason to give the following conjecture:

\begin{conj}
Let $r_1,\ldots,r_t$ be positive integers with $r_1\le \cdots \le r_t$.
Then we have
$$s_{\mathrm{dual}}(S_{r_1,\ldots,r_t})= \sum_{\zb\in \calD}\rbra{\prod_{i=1}^{t-1}\frac{\binom{r_t+z_i}{r_i}}{\binom{r_t}{r_i}}}s_{\mathrm{gen}}(M_{\zb}).$$
\end{conj}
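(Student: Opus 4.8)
The plan is to establish the reverse of the inequality~(\ref{ineq_dual2}), i.e.\ that its right-hand side is also a lower bound for $s_{\mathrm{dual}}(S_{r_1,\ldots,r_t})$. By Proposition~\ref{prop go toric} (note that $\Vol(\ZZ^d)=1$) together with the computation carried out in the proof of Theorem~\ref{dual_segre}, the value of $\frac{1}{|T|}\Vol\rbra{\bigcup_{\kb\in T}P_\kb}$ at $T=\calF$ is precisely the right-hand side of~(\ref{ineq_dual2}); hence the conjecture is equivalent to the assertion that $T=\calF$ \emph{minimises} this ratio among all nonempty $T\subseteq\calF$. Using the slicing $\bigcup_{\kb\in T}P_\kb=\bigcup_{\cb\in I(T)}Q_\cb$, Lemma~\ref{lem_genF}, and the fact that $\Vol(Q_\cb)=s_{\mathrm{gen}}(M_{\zb(\lb)})$ depends only on the tuple $\lb=(|\cb_1|,\ldots,|\cb_{t-1}|)\in\calE$, where $\zb(\lb)=(l_1-r_t-1,\ldots,l_{t-1}-r_t-1)$, the statement to be proved becomes
\begin{equation}\label{plan_goal}
\sum_{\lb\in\calE}\frac{|H_\lb(T)|}{|T|}\,s_{\mathrm{gen}}(M_{\zb(\lb)})
\ \ge\ \sum_{\lb\in\calE}\frac{|H_\lb(\calF)|}{|\calF|}\,s_{\mathrm{gen}}(M_{\zb(\lb)})
\end{equation}
for every nonempty $T\subseteq\calF$, whose right-hand side, via $|H_\lb(\calF)|=\prod_{i=1}^{t-1}\binom{l_i-1}{r_i}$, $|\calF|=\prod_{i=1}^{t-1}\binom{r_t}{r_i}$ and the convention $s_{\mathrm{gen}}(M_\zb)=0$ for $\zb\notin\calD$, coincides with the right-hand side of~(\ref{ineq_dual2}).

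The first thing I would try is the stronger, term-by-term bound
\begin{equation}\label{plan_tbt}
\frac{|H_\lb(T)|}{|T|}\ \ge\ \frac{|H_\lb(\calF)|}{|\calF|}=\prod_{i=1}^{t-1}\frac{\binom{l_i-1}{r_i}}{\binom{r_t}{r_i}}
\qquad\text{for every }\lb\in\calE,
\end{equation}
which would imply~(\ref{plan_goal}) at once since $s_{\mathrm{gen}}(M_{\zb(\lb)})\ge0$. For $\lb=(r_t+1,\ldots,r_t+1)$ both sides of~(\ref{plan_tbt}) equal $1$, because $H_\lb(T)=T$. When $T$ is of product form $T=T_1\times\cdots\times T_{t-1}$ we have $H_\lb(T)=\prod_{i=1}^{t-1}H_{l_i}(T_i)$, each sequence $(h_l(T_i))_l$ is an $O$-sequence with $h_1(T_i)\le r_i+1$, and multiplying the estimates $|H_{l_i}(T_i)|/|T_i|\ge\binom{l_i-1}{r_i}/\binom{r_t}{r_i}$ supplied by Lemma~\ref{key_lemma} gives~(\ref{plan_tbt}); this is the mechanism behind Propositions~\ref{attain1} and~\ref{attain2}. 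For a general $T$ one would compare it with the box $T'=T_1\times\cdots\times T_{t-1}\subseteq\calF$ spanned by its projections: there $T\subseteq T'$, $H_\lb(T)\subseteq H_\lb(T')$, $|T|=|H_{(r_t+1,\ldots,r_t+1)}(T)|$ and $|T'|=|H_{(r_t+1,\ldots,r_t+1)}(T')|$, so it would suffice to prove $|H_\lb(T)|/|T|\ge|H_\lb(T')|/|T'|$ for every $\lb\in\calE$.

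The hard part is exactly this last inequality for an unrestricted order ideal $I(T)$ in the product poset $I(\calF_1)\times\cdots\times I(\calF_{t-1})$. As already noted after Theorem~\ref{dual_segre}, in general $I(T)\subsetneq I(T_1)\times\cdots\times I(T_{t-1})$, so the factorised counting that makes the product case work is unavailable, and~(\ref{plan_tbt}) turns into a Kruskal--Katona / Clements--Lindstr\"om type statement for products of the multicomplexes $I(\calF_i)$, to the effect that the ``shadow densities'' $|H_\lb(T)|/|T|$ of an arbitrary ideal dominate those of the box it generates. I expect this to require a compression argument performed one coordinate block at a time, each step reducing to a single-block inequality of the type already packaged in Lemma~\ref{key_lemma}, the delicate point being to keep the compressions compatible with the constraint $T\subseteq\calF$. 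Should~(\ref{plan_tbt}) fail for some $T$ and $\lb$ (which I have not managed to rule out), the fallback is to prove~(\ref{plan_goal}) directly, exploiting a structural property of the weights $s_{\mathrm{gen}}(M_{\zb(\lb)})$ as $\lb$ varies over $\calE$: these are the volumes of the polytopes~(\ref{region_F}), and a (log-)concavity statement for them along the region $\calD$ would let a surplus at some levels offset a deficit at others. A further option is an induction on the number $t$ of polynomial factors, with the case $t=2$ of Proposition~\ref{attain1} as the base.
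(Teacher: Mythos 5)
The statement you are asked to prove is stated in the paper as a \emph{conjecture}: the paper itself only establishes the inequality $\le$ (Theorem~\ref{dual_segre}) and equality in the two special cases of Propositions~\ref{attain1} and~\ref{attain2}. So there is no proof in the paper to compare against, and your proposal does not supply one either. Your reduction is correct and matches the paper's framework: since the value of $\frac{1}{|T|}\Vol\bigl(\bigcup_{\kb\in T}P_\kb\bigr)$ at $T=\calF$ equals the right-hand side, the conjecture amounts to showing that $T=\calF$ achieves the minimum in Proposition~\ref{prop go toric}, and your inequality~(\ref{plan_goal}) is exactly the right target. You also correctly identify that for product sets $T=T_1\times\cdots\times T_{t-1}$ the claim follows from Lemma~\ref{key_lemma} applied factorwise, which is precisely the mechanism of Propositions~\ref{attain1} and~\ref{attain2}.

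The gap is the step you yourself flag: the inequality $|H_\lb(T)|/|T|\ge|H_\lb(T')|/|T'|$ for an arbitrary order ideal $I(T)$ inside the product $I(\calF_1)\times\cdots\times I(\calF_{t-1})$, where $T'$ is the box spanned by the projections of $T$. This is the entire content of the conjecture beyond what the paper already proves; the paper explicitly notes after Theorem~\ref{dual_segre} that the factorization~(\ref{gap}) fails in general, which is exactly why the argument does not extend. Your proposed remedies (a Clements--Lindstr\"om-style compression compatible with the constraint $T\subseteq\calF$, a log-concavity property of the weights $s_{\mathrm{gen}}(M_{\zb(\lb)})$ over $\calD$, or induction on $t$) are plausible directions but none is carried out, and you concede you cannot even rule out that the term-by-term bound~(\ref{plan_tbt}) fails for some $T$ and $\lb$. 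As written, the proposal is a correct problem reduction plus a research plan, not a proof.
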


\bigskip

\end{document}